\theoremstyle{plain}
\newtheorem{thm}{Theorem}[section]
\newtheorem{lem}[thm]{Lemma}
\theoremstyle{definition}
\newtheorem{defn}{Definition}[section]
\theoremstyle{remark}
\begin{document}
\title{{\Large\bf {
Optimal control of quantum stochastic systems in fermion fields: The Pontryagin-type maximum principle $\textrm{(II)}$}}}
\author{{\normalsize  Penghui Wang, Shan Wang,{\thanks{E-mail addresses: phwang@sdu.edu.cn(P.Wang), 202020244@mail.sdu.edu.cn(S.Wang).}}  \,\,\,\,  } \\
{ School of Mathematics, Shandong University,} {\normalsize Jinan, 250100, China} }
\date{}
\maketitle
\begin{minipage}{14cm} {\bf Abstract }
In the present paper, by using the relaxed transposition method \cite{L.Z-2014}, we solve the second-order adjoint equations, corresponding to the optimal control of quantum stochastic systems in fermion fields, which plays the fundamental roles in the study of the Pontryagin-type maximum principle in quantum stochastic optimal control. The second-order adjoint equation is a backward operator-valued quantum stochastic differential equation, which has no definition in the algebra of bounded operators, and the solution derived from the relaxed transposition method makes sense in $W^{*}$-topology.
\\
\noindent{\bf 2020 AMS Subject Classification:}  47C15, 
  81S25, 
   81Q93, 
    81V74. 

\noindent{\bf Keywords.} The second-order adjoint equations; the relaxed transposition solution; quantum stochastic control; noncommutative probability spaces.
\end{minipage}
 \maketitle
\numberwithin{equation}{section}
\newtheorem{theorem}{Theorem}[section]
\newtheorem{lemma}[theorem]{Lemma}
\newtheorem{proposition}[theorem]{Proposition}
\newtheorem{corollary}[theorem]{Corollary}
\newtheorem{remark}[theorem]{Remark}


\section{Introduction}\label{Intro}
\indent\indent
The present paper is a continuous work of \cite{W.W}, in which we concentrate on the Pontryagin-type maximum principle for optimal control of quantum stochastic systems in fermion fields. Quantum stochastic control systems arise through the interaction of quantum system with an external field,  which can be expressed in the form of quantum stochastic differential equations (QSDEs, for short) \cite{D.P.R}. The quantum control scenario is a typical representative of quantum optical experiments, as shown in Figure 1.1 of \cite{M.R}. Quantum optical system consists of a clump of atoms,  interacting with an optical field produced by a laser.  We want to control the state of a clump of atoms, for example, to control their collective angular momentum. To observe the atoms, a laser probe field is scattered away from the atoms and the scattered light is measured using a pure difference detector (a cavity can be used to increase the intensity of the interaction between the light and the atoms).     The observed process is fed into a controller, which can control the motion of the atoms through an actuator, e.g., a time-varying magnetic field. 

From the seminal work of von Neumann on the axiomatization of quantum mechanics, quantum probability theory has emerged as a noncommutative generalization of probability theory \cite{S-1}. A general quantum probabilistic theory of quantum stochastic processes was introduced in the work of Accardi, Frigerio, Davies and Lewis etc \cite{AFL,Davies-1,Davies-2,Davies-3,Frigerio,Lewis}.
Quantum probability theory has been rapidly developed by \cite{B,B.H.J,Gardiner-2004,GC-1985,GC-2000,H.L.,P.H,James.2005,James.2012,J.W.W-2024,Meyer,P.Book} since the 1980s, which provides a rigorous basis for studying quantum stochastic calculus and developing a modern formulation of quantum stochastic systems and control theory. In particular, 
Gardiner \cite{Gardiner-2004} gave a statement of the mathematics of quantum Markov processes and developed all quantum analogues of the classical Markovian 
techniques. He explained how to deal with the inputs and outputs in fermion damped quantum systems, and developed fermionic quantum stochastic integration.
Applebaum, Barnrtt, Gough et al. \cite{A.H-1,A.H-2,B.S.W.1,B.S.W.2,B.S.W.3,G.G.J.N} obtained fermion It\^{o}'s formula, developed the fermion quantum stochastic calculus and established fermion filtering theory.

Let us introduce the quantum probability spaces mentioned in this paper.
Let $\mathscr{H}$ be a separable complex Hilbert space. 
The anti-symmetric Fock space over $\mathscr{H}$ \cite{B.S.W.1,P.Book} is defined by
\begin{equation*}
\Lambda(\mathscr{H}):=\bigoplus_{n=0}^\infty\Lambda_n(\mathscr{H}),
\end{equation*}
where $\Lambda_n(\mathscr{H})$ is the Hilbert space anti-symmetric $n$-fold tensor product of $\mathscr{H}$ with itself, and $\Lambda_0(\mathscr{H}):=\mathbb{C}$.
For any $z\in\mathscr{H}$, the creation operator $C(z):\Lambda_n(\mathscr{H})\rightarrow \Lambda_{n+1}(\mathscr{H})$ defined by $v\mapsto \sqrt{n+1}\ z\wedge v$, is a bounded operator on $\Lambda(\mathscr{H})$ with $\|C(z)\|=\|z\|$. 
The annihilation operator $A(z)$ is the adjoint of $C(z)$, i.e., $A(z)=C(z)^*$.
The fermion field $\Psi(z)$ is defined on $\Lambda(\mathscr{H})$ by
\begin{equation}\label{definition Psi}
\Psi(z):=C(z)+A(Jz),
\end{equation}
where the map $J: \mathscr{H}\rightarrow\mathscr{H}$ is a conjugation operator \cite{W.F}, i.e., $J$ is antilinear, antiunitary, and $J^2=1$. 
Then the canonical anti-commutation relation (CAR, for short) holds:
\begin{equation}\label{CAR-Psi}
\{\Psi(z),\Psi(z')\}\equiv\Psi(z)\Psi(z')+\Psi(z')\Psi(z)=2\langle Jz',z\rangle I,\quad z,z'\in \mathscr{H}.
\end{equation}
Let $\mathscr{C}$ be von Neumann algebra generated by $\{\Psi(v): v\in \mathscr{H}\}$. As in \cite{Gross}, for the Fock vacuum $\Omega\in\Lambda_0(\mathscr{H})\subseteq\Lambda(\mathscr{H})$,  we define
\begin{equation*}
m(f):=\langle\Omega, f\Omega\rangle_{\Lambda(\mathscr{H})},\quad f\in \mathscr{C}.
\end{equation*}
By \cite{S.}, $m(\cdot)$ is a faithful, normal, central state on $\mathscr{C}$. The space $(\mathscr{C}, m)$ is a quantum (noncommutative) probability space. 
For  $p\in[1,\infty)$, we define the noncommutative $L^p$-norm on $\mathscr{C}$ by
\begin{equation*}
  \|f\|_{p}:=m\left(|f|^p\right)^\frac{1}{p}=\left\langle\Omega, |f|^p\Omega\right\rangle_{\Lambda(\mathscr{H})}^\frac{1}{p},
\end{equation*}
where $|f|=(f^*f)^\frac{1}{2}$. The space $L^p(\mathscr{C},m)$ is the completion of $(\mathscr{C}, \|\cdot\|_p)$, which is the noncommutative $L^p$-space, abbreviated as $L^p(\mathscr{C})$.
For any von Neumann subalgebra $\mathscr{B}$  of $\mathscr{C}$,  $m(\cdot|\mathscr{B})$ denotes the conditional expectation \cite{S.,W.F} with respect to $\mathscr{B}$, that is, for any $f\in L^p(\mathscr{C})$,
\begin{equation*}
m(fg)=m(m(f|\mathscr{B})g),\quad g\in \mathscr{B}.
\end{equation*}

In this paper, let $\mathscr{H}=L^2(\mathbb R^+)$, $Jf=\bar{f}$ for $f\in L^2(\mathbb{R}^+)$, and the filtration $\{\mathscr{C}_t\}_{t\geq 0}$ of $\mathscr{C}$ be the increasing family of von Neumann subalgebras of $\mathscr{C}$ \cite{P.X} generated by 
\begin{equation*}
  \left\{\Psi(v): v\in L^2(\mathbb{R}^+)\ \textrm{and}\ \textrm{ess supp}\ v\subseteq [0,t]\right\}.
\end{equation*}
The fermion Brownian motion $W(t)$ was defined in \cite{B.S.W.1} by
\begin{equation*}
W(t):=\Psi(\chi_{[0,t]}),\quad t\geq 0.
\end{equation*}
By CAR \eqref{CAR-Psi}, it is obvious that $ W(t)^*=W(t)$ and $W(t)^2=tI.$

Inspired by the classical control theory, we have investigated the optimal control problem of the following quantum stochastic control system in fermion fields:
\begin{equation}\label{QSCS}
\left\{
\begin{aligned}
dx(t)=&\widehat{D}(t,x(t),u(t))dt+\widehat{F}(t,x(t),u(t))dW(t)+dW(t)\widehat{G}(t,x(t),u(t)),\textrm{ in } (t_0,T],\\
x(t_0)=&x_0,
\end{aligned}\right.
\end{equation} 
where the  initial condition $x_0\in L^p(\mathscr{C}_{t_0})$, the maps $\widehat{D}(\cdot,\cdot,\cdot),\widehat{F}(\cdot,\cdot,\cdot),\widehat{G}(\cdot,\cdot,\cdot):[0,T]\times L^p(\mathscr{C})\times U\to L^p(\mathscr{C})$ are adapted  for $p\geq 2$, $u(\cdot)\in U$ is control variable and $U$ is nonconvex control domain.
Physically, the solution $x(t)$ represents the observable at the time $t$ \cite{B.H.J}, such as position, momentum, angular momentum, spin, etc. And $\widehat{D}(\cdot,\cdot,\cdot)$ represents the time evolution operator, $\widehat{F}(\cdot,\cdot,\cdot), \widehat{G}(\cdot,\cdot,\cdot)$ represent the system operators, which together with the field operator $W(t)$, model the interaction of the system with the channels \cite{James.2005}. 

To the best of our knowledge, Belavkin, Gough, James et al. \cite{B.N.M,E.B,G.B.S,G.B.S-06Bellman,James.2005,James.2012} have conducted a great deal of studies on quantum stochastic control. They investigated quantum stochastic control problem by using dynamic programming principle, \textit{when the diffusion term is not controlled}. In \cite{W.W}, we have obtained the Pontryagin-type maximum principle for quantum stochastic optimal control problem of \eqref{QSCS}. The result indicates that the solution to the following second-order adjoint equation is necessary condition of quantum stochastic optimal control problem,
\begin{equation}\label{BSDE-P}
\left\{
\begin{aligned}
dP(t)=&-\{D(t)^*P(t)+P(t)D(t)+F(t)^* P(t)F(t)+F(t)^*Q(t)\Upsilon\\
&\indent+Q(t)\Upsilon F(t) -\mathbb{H}(t)\}dt+Q(t) dW(t),  \quad\textrm{in}\ [0,T),\\
P(T)=&P_T,
\end{aligned}
\right.
\end{equation}
where the terminal condition $P_T\in \mathcal{L}(L^2(\mathscr{C}_T))$, 
 $\mathbb{H}\in L^1_\mathbb{A}([0,T]; \mathcal{L}(L^2(\mathscr{C})))$, $D,F\in L^2_\mathbb{A}([0,T];\mathcal{L}(L^2(\mathscr{C})))$, and for any $t\in[0,T]$  
\begin{equation*}
D(t)=\widehat{D}_x(t,x(t),u(t)),\quad  F(t)=\widehat{F}_x(t,x(t),u(t))+\Upsilon \widehat{G}_x(t,x(t),u(t)).
\end{equation*}
Here $\Upsilon:L^2(\mathscr{C})\to L^2(\mathscr{C})$ denotes the self-adjoint unitary operator \cite{B.S.W.1,P.X} uniquely determined by 
\begin{equation*}
 \Upsilon(\Psi(v_1)\Psi(v_2)\cdots\Psi(v_n)):=\Psi(-v_1)\Psi(-v_2)\cdots\Psi(-v_n),\quad  v_i\in \mathscr{H}, \ 0\leq i\leq n.
\end{equation*}

As a continuation of  \cite{W.W}, this paper investigates the solutions to the second-order adjoint equations \eqref{BSDE-P} for quantum stochastic control system  \eqref{QSCS} in $L^2(\mathscr{C})$. 
However, by means of the result of Pisier and Xu in \cite{P.X}, the operator-valued quantum stochastic integration has no definition, even if the coefficients are multipliers in $\mathscr{C}$. In order to understand the above  operator-valued backward quantum stochastic differential equation (BQSDE, for short), we need the relaxed transposition method introduced by Lü and Zhang \cite{L.Z-2014,L.Z-2020}, which plays a crucial role in studying operator-valued backward stochastic partial differential equations.

First, we introduce the following  QSDEs driven by fermion Brownian motion:
\begin{equation}\label{QSDE-Forward-1}
 \left\{
  \begin{aligned}
  dx_1(t)&=\{D(t)x_1(t)+u_1(t)\}dt+\{F(t)x_1(t)+v_1(t)\}dW(t),\ \textrm{in}\  (t_0,T],\\
  x_1(t_0)&=\xi_1,
  \end{aligned}
  \right.
\end{equation}
and
\begin{equation}\label{QSDE-Forward-2}
  \left\{
  \begin{aligned}
  dx_2(t)&=\{D(t)x_2(t)+u_2(t)\}dt+\{F(t)x_2(t)+v_2(t)\}dW(t),\ \textrm{in}\ (t_0,T],\\
  x_2(t_0)&=\xi_2,
  \end{aligned}
  \right.
\end{equation}
where the  initial conditions $\xi_1, \xi_2\in L^2(\mathscr{C}_{t_0})$, 
$u_1, u_2,v_1,v_2\in L^2_\mathbb{A}([0,T];L^2(\mathscr{C}))$. 
By \cite[Theorem 2.1]{B.S.W.2}, the equation \eqref{QSDE-Forward-1} (\textit{resp.} \eqref{QSDE-Forward-2}) admits a unique adapted solution $x_1(\cdot)\in C_\mathbb{A}([t_0,T];L^2(\mathscr{C}))$ (\textit{resp.} $x_2(\cdot)\in C_\mathbb{A}([t_0,T];L^2(\mathscr{C}))$).


Put
\begin{align*}
\mathcal{H}_t:=&L^2(\mathscr{C}_t)\times L^2_\mathbb{A}([t,T];L^2(\mathscr{C}))\times L^2_\mathbb{A}([t,T];L^2(\mathscr{C})),\ t\in[0,T].\\
\mathcal{Q}(0,T)&:=\left\{\left(Q^{(\cdot)},\widehat{Q}^{(\cdot)}\right);\right. Q^{(t)},\widehat{Q}^{(t)}\in \mathcal{L}(\mathcal{H}_t;L^2_\mathbb{A}([t,T];L^2(\mathscr{C}))),\ \textrm{and}\\
&\indent\indent\indent\indent\left. Q^{(t)}(0,0,\cdot)^*=\widehat{Q}^{(t)}(0,0,\cdot)\ \textrm{for any}\ t\in [0,T]\right\}.
\end{align*}
\begin{defn}
The triple $\left(P(\cdot),Q^{(\cdot)},\widehat{Q}^{(\cdot)}\right)\in C_\mathbb{A}([0,T];\mathcal{L}(L^2(\mathscr{C})))\times \mathcal{Q}(0,T)$ is called to be a relaxed transposition solution to \eqref{BSDE-P} if for any $t_0\in[0,T]$, $\xi_1, \xi_2\in L^2(\mathscr{C}_{t_0})$, $u_1, u_2, v_1,v_2\in  L^2_\mathbb{A}([0,T];L^2(\mathscr{C}))$, it holds that
\begin{equation}\label{Def of relaxed transposition solution}
 \begin{aligned}
 &\langle P_Tx_2(T), x_1(T)\rangle-\int_{t_0}^T\langle \mathbb{H}(t)x_2(t),x_1(t)\rangle dt\\
=&\langle P(t_0)\xi_2, \xi_1\rangle+\int_{t_0}^T\langle P(t)u_2(t), x_1(t)\rangle dt+\int_{t_0}^T\langle P(t)x_2(t), u_1(t)\rangle dt\\
&+\int_{t_0}^T\langle P(t)F(t)x_2(t), v_1(t)\rangle dt+\int_{t_0}^T\langle P(t)v_2(t),F(t)x_1(t)+v_1(t)\rangle dt\\
&+\int_{t_0}^T\left\langle Q^{(t_0)}(\xi_2,u_2,v_2)(t), v_1(t)\right\rangle dt+\int_{t_0}^T\left\langle v_2(t), \widehat{Q}^{(t_0)}(\xi_1,u_1,v_1)(t)\right\rangle dt,
 \end{aligned}
\end{equation}
where $x_1(\cdot)$ and $x_2(\cdot)$ are the solution to \eqref{QSDE-Forward-1} and \eqref{QSDE-Forward-2}, respectively.
\end{defn}

\begin{thm}\label{Thm the relaxed transposition}
 The second-order adjoint equation \eqref{BSDE-P} admits a unique relaxed transposition solution $(P(\cdot),Q^{(\cdot)},\widehat{Q}^{(\cdot)})\in C_\mathbb{A}([0,T];\mathcal{L}(L^2(\mathscr{C})))\times \mathcal{Q}(0,T)$. Further,
\begin{equation}\label{estimate of main thm}
 \|P\|_{C_\mathbb{A}([0,T];\mathcal{L})}+\sup_{t\in [0,T]}\left\|\left(Q^{(t)},\widehat{Q}^{(t)}\right)\right\|_{\mathcal{L}(\mathcal{H}_{t};L^2([t,T];L^2(\mathscr{C})))^2}
\leq \mathcal{C}\left(\|P_T\|_{\mathcal{L}}+\|\mathbb{H}\|_{L^1_\mathbb{A}([0,T];\mathcal{L})}\right).
\end{equation}
\end{thm}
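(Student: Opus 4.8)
The plan is to adapt the relaxed transposition method of L\"u and Zhang \cite{L.Z-2014,L.Z-2020} to the present fermion-field setting. The only analytic ingredient needed is the well-posedness and a priori bound for the forward QSDEs \eqref{QSDE-Forward-1}--\eqref{QSDE-Forward-2} from \cite[Theorem 2.1]{B.S.W.2}: for $t\in[0,T]$, $\eta\in L^2(\mathscr{C}_{t})$ and $(u,v)\in L^2_{\mathbb{A}}([t,T];L^2(\mathscr{C}))^2$, the forward solution $x^{t,\eta,u,v}$ is linear in $(\eta,u,v)$, continuous in $(t,\eta,u,v)$, and obeys $\|x^{t,\eta,u,v}\|_{C_{\mathbb{A}}([t,T];L^2(\mathscr{C}))}\le\mathcal{C}(\|\eta\|+\|u\|_{L^2_{\mathbb{A}}}+\|v\|_{L^2_{\mathbb{A}}})$ with $\mathcal{C}$ independent of $t$. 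Concretely I will: build $P$ directly from the homogeneous forward flow; build $(Q^{(\cdot)},\widehat{Q}^{(\cdot)})$ by Riesz representation of functionals extracted from the identity \eqref{Def of relaxed transposition solution}; verify \eqref{Def of relaxed transposition solution} together with the adjoint constraint defining $\mathcal{Q}(0,T)$; and finally derive \eqref{estimate of main thm} and uniqueness by inserting special data into \eqref{Def of relaxed transposition solution}.

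\emph{Step 1 (the operator $P$).} Write $\Phi^{t}(\eta):=x^{t,\eta,0,0}$ for the homogeneous forward flow on $[t,T]$, and for $\eta_1,\eta_2\in L^2(\mathscr{C}_t)$ set
\begin{equation*}
\langle\widetilde{P}(t)\eta_2,\eta_1\rangle:=\big\langle P_T\Phi^{t}(\eta_2)(T),\,\Phi^{t}(\eta_1)(T)\big\rangle-\int_{t}^{T}\big\langle\mathbb{H}(s)\Phi^{t}(\eta_2)(s),\,\Phi^{t}(\eta_1)(s)\big\rangle\,ds.
\end{equation*}
By the forward bound this is a bounded sesquilinear form on $L^2(\mathscr{C}_t)$, hence defines $\widetilde{P}(t)\in\mathcal{L}(L^2(\mathscr{C}_t))$ with $\|\widetilde{P}(t)\|_{\mathcal{L}}\le\mathcal{C}(\|P_T\|_{\mathcal{L}}+\|\mathbb{H}\|_{L^1_{\mathbb{A}}([0,T];\mathcal{L})})$; sandwiching with the conditional expectation $m(\cdot\,|\,\mathscr{C}_t)$ gives an adapted extension $P(t)\in\mathcal{L}(L^2(\mathscr{C}))$ with the same bound. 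Choosing $t=T$ gives $P(T)=P_T$ (since $\Phi^{T}(\eta)(T)=\eta$ and the integral is empty), and the joint continuity of $(t,\eta)\mapsto\Phi^{t}(\eta)$ gives $P\in C_{\mathbb{A}}([0,T];\mathcal{L}(L^2(\mathscr{C})))$; this already produces the $\|P\|_{C_{\mathbb{A}}([0,T];\mathcal{L})}$ part of \eqref{estimate of main thm}.

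\emph{Step 2 (the operators $Q^{(\cdot)}$, $\widehat{Q}^{(\cdot)}$ and the identity).} Fix $t_0$ and let $\mathcal{T}_{t_0}((\xi_2,u_2,v_2),(\xi_1,u_1,v_1))$ be the left-hand side of \eqref{Def of relaxed transposition solution} minus every term of its right-hand side containing $P$. Expanding $x_1$ and $x_2$ by Duhamel's principle into a homogeneous part plus drift- and $dW$-source responses (the latter fermion stochastic integrals against the flow $\Phi$), and then using Fubini's theorem, the flow (semigroup) property of $\Phi$, and the definition of $P$ from Step 1, I expect a chain of cancellations --- the pure drift/initial-datum cross-terms cancel against $\langle P(t_0)\xi_2,\xi_1\rangle$ and against the $P$-terms of \eqref{Def of relaxed transposition solution} --- reducing $\mathcal{T}_{t_0}$ to three bounded bilinear forms: one in $((\xi_2,u_2,v_2),v_1)$, one in $((\xi_1,u_1,v_1),v_2)$, and one in $(v_2,v_1)$. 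Riesz representation in $L^2_{\mathbb{A}}([t_0,T];L^2(\mathscr{C}))$ then produces $Q^{(t_0)},\widehat{Q}^{(t_0)}\in\mathcal{L}(\mathcal{H}_{t_0};L^2_{\mathbb{A}}([t_0,T];L^2(\mathscr{C})))$ representing these forms; splitting the $(v_2,v_1)$-form equally between the two $Q$-slots forces $Q^{(t_0)}(0,0,\cdot)^{*}=\widehat{Q}^{(t_0)}(0,0,\cdot)$, so $(Q^{(\cdot)},\widehat{Q}^{(\cdot)})\in\mathcal{Q}(0,T)$ and \eqref{Def of relaxed transposition solution} holds by construction. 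An equivalent but, for rigour, cleaner route first approximates $(P_T,\mathbb{H})$ by data making \eqref{BSDE-P} an honest operator-valued BQSDE --- e.g.\ after restricting to finitely many fermion modes, so that it is a matrix-valued backward SDE with solution $(P^{k},Q^{k})$ --- derives \eqref{Def of relaxed transposition solution} for $(P^{k},Q^{k})$ from the fermion It\^{o} formula \cite{B.S.W.2,A.H-1,A.H-2} (with $Q^{(t_0),k}(\xi_2,u_2,v_2)(\cdot)=Q^{k}(\cdot)(x_2^{k}(\cdot)+v_2(\cdot))$ modulo $\Upsilon$-twists), and passes to the limit in the $W^{*}$-sense using the uniform bounds of Step 3. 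Either way, the genuine obstacle --- and the point where the fermion field really matters --- is this: one must carry the grading twist $\Upsilon$ that appears whenever $dW$ is moved past an odd element in the It\^{o} product rule, and verify that the $dW$-channel terms assemble into exactly the combination $F^{*}PF+F^{*}Q\Upsilon+Q\Upsilon F$ in \eqref{BSDE-P}, so that the $P$- and $Q$-terms of \eqref{Def of relaxed transposition solution} come out as written; put differently, one must justify the fermion It\^{o} formula applied to the non-smooth object $P$, or control the finite-mode approximation uniformly in $k$.

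\emph{Step 3 (estimate and uniqueness).} The bound on $(Q^{(t_0)},\widehat{Q}^{(t_0)})$, uniform in $t_0\in[0,T]$, comes from testing \eqref{Def of relaxed transposition solution} against well-chosen data: taking $v_2=0$, $\xi_1=u_1=0$, and $v_1$ the normalization of $Q^{(t_0)}(\xi_2,u_2,0)$ collapses the two $Q$-terms to $\|Q^{(t_0)}(\xi_2,u_2,0)\|_{L^2_{\mathbb{A}}}$, while the surviving terms are bounded by $\mathcal{C}(\|P_T\|_{\mathcal{L}}+\|\mathbb{H}\|_{L^1_{\mathbb{A}}})(\|\xi_2\|+\|u_2\|_{L^2_{\mathbb{A}}})$ via Step 1 and the forward bound; the mirror choice controls $\widehat{Q}^{(t_0)}(\cdot,\cdot,0)$; and the choice $\xi_i=u_i=0$, where the adjoint constraint makes the two $Q$-terms equal to $2\langle Q^{(t_0)}(0,0,v_2),v_1\rangle$, controls $Q^{(t_0)}(0,0,\cdot)$ and $\widehat{Q}^{(t_0)}(0,0,\cdot)$; summing the three contributions yields \eqref{estimate of main thm}. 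For uniqueness, note that by linearity the difference of two relaxed transposition solutions is itself a relaxed transposition solution for $P_T=0$, $\mathbb{H}=0$; calling it $(P,Q^{(\cdot)},\widehat{Q}^{(\cdot)})$ again, \eqref{Def of relaxed transposition solution} with $u_i=v_i=0$ reads $0=\langle P(t_0)\eta_2,\eta_1\rangle$ on $L^2(\mathscr{C}_{t_0})$, so $P=0$; then $v_2=0$ gives $Q^{(\cdot)}(\cdot,\cdot,0)=0$, $v_1=0$ gives $\widehat{Q}^{(\cdot)}(\cdot,\cdot,0)=0$, and $\xi_i=u_i=0$ together with the adjoint constraint gives $Q^{(\cdot)}(0,0,\cdot)=0=\widehat{Q}^{(\cdot)}(0,0,\cdot)$; hence $Q^{(\cdot)}=\widehat{Q}^{(\cdot)}=0$, proving uniqueness.
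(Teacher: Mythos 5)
Your Steps 1 and 3 are essentially sound and coincide with what the paper does: the flow/conditional-expectation formula you use to define $P(t)$ is exactly the paper's identity for $R^{(t_0)}$ (compare \eqref{the first equation about R}), the equal splitting of the $(v_1,v_2)$-form is precisely how the paper builds $Q_3^{t_0}=\tfrac12\widehat{Q}_3^{t_0}$ and enforces $Q^{(t_0)}(0,0,\cdot)^*=\widehat{Q}^{(t_0)}(0,0,\cdot)$, and your uniqueness argument is the paper's. The genuine gap is Step 2, i.e.\ existence, which you reduce to an ``expected chain of cancellations'' and then yourself flag as the obstacle. In your direct route nothing is actually proved: even the $v_1=v_2=0$ instance of \eqref{Def of relaxed transposition solution} for the flow-defined $P$ (a first-order transposition identity) needs an argument, and showing that the remainder $\mathcal{T}_{t_0}$ decomposes into bounded forms representable by operators into $L^2_\mathbb{A}([t_0,T];L^2(\mathscr{C}))$ with exactly the stated $\Upsilon$-structure is the content of the theorem, not a Duhamel/Fubini bookkeeping step; there is no honest $Q$-process in hand to which a fermion It\^{o} formula could be applied, which is the very reason the relaxed transposition notion is introduced.

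Your fallback (approximate the data, solve an honest equation, pass to the limit) is indeed the paper's route, but none of its substance is supplied. The paper first proves well-posedness, in the transposition sense, of the approximating $\mathcal{L}_2(L^2(\mathscr{C}))$-valued BQSDE \eqref{BQSDE-HS} (Theorem \ref{Thm-the transposition solution}), which rests on the adapted duality $\left(L^p_\mathbb{A}([0,T];\mathcal{L}_2(L^2(\mathscr{C})))\right)^*=L^q_\mathbb{A}([0,T];\mathcal{L}_2(L^2(\mathscr{C})))$ (Theorem \ref{Riesz representation}) and on an It\^{o} computation for $\textbf{T}=x_1\otimes x_2$ that produces the correctly $\Upsilon$-twisted terms; your ``finite fermion mode'' matrix-valued backward equation is neither set up nor shown to yield the identity \eqref{inner product of Step 1 of The relaxed transposition}. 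More importantly, the limit passage is not routine: there is no uniform bound on $Q^n$ itself in any norm, and the paper's key device is to bound and extract weak limits only of the composites $Q^n(\cdot)\Upsilon U(\cdot,t_0)\xi$, $\Upsilon Q^n(\cdot)^*U(\cdot,t_0)\xi$, $Q^n(\cdot)\Upsilon V(\cdot,t_0)u$ and of the bilinear functional $\mathcal{B}_{n,t_0}$, these limits being what define $Q_1^{t_0},\widehat{Q}_1^{t_0},Q_2^{t_0},\widehat{Q}_2^{t_0},Q_3^{t_0}$ and hence $Q^{(t_0)},\widehat{Q}^{(t_0)}$; finally one must identify the weak limit $R^{(t)}$ of $P^n(t)$ with $P(t)$ (a Lebesgue-point argument as in \eqref{ft}--\eqref{n to infty,P}) and prove continuity of $P$. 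Without these steps, or a worked-out substitute for them, the existence half of the theorem --- and with it the $Q$-part of \eqref{estimate of main thm} --- is not established; your Step 3 estimates are fine as stated but presuppose that a relaxed transposition solution already exists.
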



The rest of this paper is organized as follows. 
In Section \ref{Pre}, we present some preliminary results, which is needed in the sequel.   Section \ref{The main result} is devoted to proving the main result. In order to prove Theorem \ref{Thm the relaxed transposition}, we obtain the transposition solutions to the second-order adjoint equations with special date $P_T$ and $\mathbb{H}$ in Section \ref{The transpsotion solution of BQSDEs}.  We concentrate on proving Theorem \ref{Thm the relaxed transposition} in Section \ref{The realxed transpsotion solution of BQSDEs}.

\textbf{\emph{Notations.}} For any $a,b,c\in L^2(\mathscr{C})$, $\alpha,\beta\in \mathbb{C}$,
\begin{itemize}
  \item $$\langle\alpha a+\beta b, c\rangle=\bar{\alpha} \langle x, z\rangle+\bar{\beta}\langle y, z\rangle, \quad \langle  a, \alpha b+\beta c\rangle=\alpha\langle a, b\rangle+\beta\langle a,c\rangle.$$
\end{itemize}

For given $T>0$ and $p,q\in[1,\infty)$, we give the following classes processes will be used in this paper.
\begin{itemize}
 \item  $C_\mathbb{A}([0,T];L^p(\mathscr{C}))$: the space of all adapted continuous processes $f(\cdot)$ such that $f(\cdot):[0,T]\to L^p(\mathscr{C})$ with the norm 
          $ \|f\|_{C_\mathbb{A}([0,T];L^p(\mathscr{C}))}=\sup\limits_{t\in[0,T]}\|f(t)\|_p.$
 \item  $L^q_\mathbb{A}([0,T];L^p(\mathscr{C}))$ : the space of all simple adapted $L^p(\mathscr{C})$-processes $f(\cdot)$ such that $f(\cdot):[0,T]\to L^p(\mathscr{C})$  with the norm 
 $\|f\|_{L^q_\mathbb{A}([0,T];L^p(\mathscr{C}))}=\left(\int_0^T\|f(s)\|^q_{p}ds\right)^{\frac{1}{q}}.$ 
\item  $\mathcal{L}(L^2(\mathscr{C}))$: the space of all bounded linear operators on $L^2(\mathscr{C})$ with the norm  {\small$\|A\|_{\mathcal{L}}=\sup\limits_{f\in L^2(\mathscr{C}),\|f\|_2=1}\|Af\|_2$}.  
\item $\mathcal{L}_2(L^2(\mathscr{C}))$: the space of all Hilbert-Schmidt operators on $L^2(\mathscr{C})$, which is a separable Hilbert space equipped with the inner product
\begin{equation*}
\langle A, B\rangle_{\mathcal{L}_2}=\textrm{tr}(A^*B)\quad  A, B\in \mathcal{L}_2(L^2(\mathscr{C})).
\end{equation*}
And, let $\|A\|_{\mathcal{L}_2}^2:=\langle A, A\rangle_{\mathcal{L}_2}$ be called the norm of $A$ for any $A\in \mathcal{L}_2(L^2(\mathscr{C}))$. 
\end{itemize}

Here and in what follows, when there is no confusion, denote $\langle\cdot,\cdot \rangle$ and  $\langle\cdot,\cdot \rangle_{\mathcal{L}_2}$  the inner products of Hilbert spaces $L^2(\mathscr{C})$ and $\mathcal{L}_2(L^2(\mathscr{C}))$, respectively.
\section{Preliminaries}\label{Pre}
\indent\indent
 This section is dedicated to the auxiliary results.
Like in Clifford stochastic integral in $L^2(\mathscr{C})$, we introduce simple adapted processes of $\mathcal{L}_2(L^2(\mathscr{C}))$.
\begin{defn}\label{the defnition of  simple processes of mathcal(L)_2(L^2(mathscr(C)))}
A simple adapted $\mathcal{L}_2(L^2(\mathscr{C}))$-process on $[0,T]$ is a map $\phi:[0,T]\to \mathcal{L}_2(L^2(\mathscr{C}))$ such that $\phi(t)\in \mathcal{L}_2(L^2(\mathscr{C}_t))$ for all $t\in[0,T]$, and 
\begin{equation*}
 \phi=\sum_{k\geq 0}\phi(t_k)\chi_{[t_k,t_{k+1})},
\end{equation*}
where $\{t_k\}_{k=0}^n$ is a partition of $[0,T]$ and $\phi(t_k)\in \mathcal{L}_2(L^2(\mathscr{C}_{t_k}))$, $0\leq k\leq n-1$.
\end{defn}

If $\phi =\sum_{k} \phi(t_k)\chi_{[t_k,t_{k+1})}$ is a simple adapted $\mathcal{L}_2(L^2(\mathscr{C}))$-process on $[0,T]$, the stochastic integral of $\phi$ over $[0,T]$ is
\begin{equation*}
  \int_0^T\phi(s)dW(s)=\sum_{k=0}^{n-1}\phi(t_k)(W(t_{k+1})-W(t_k)).
\end{equation*}
Clearly, $\int_{0}^{T}\phi(s)dW(s)\in \mathcal{L}_2(L^2(\mathscr{C}_T))$, and is independent of decomposition of $\phi$.

\begin{lem}\label{ito isomerty of stochastic integral valued in H-S}
If $\phi =\sum_k \phi(t_k)\chi_{[t_k,t_{k+1})}$ is a simple adapted $\mathcal{L}_2(L^2(\mathscr{C}))$-process on $[0,T]$, 
 then 
\begin{equation*}
\left\|\int_0^T\phi(s)dW(s)\right\|^2_{\mathcal{L}_2}=\int_0^T\|\phi(s)\|^2_{\mathcal{L}_2}ds. 
\end{equation*}
\end{lem}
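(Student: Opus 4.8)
My plan is to deduce the identity from the scalar Clifford--It\^{o} isometry $\big\|\int_0^T f\,dW\big\|_2^2=\int_0^T\|f(s)\|_2^2\,ds$ for simple adapted $L^2(\mathscr{C})$-processes (cf. \cite{B.S.W.1}). I would fix an orthonormal basis $\{e_i\}_{i\ge1}$ of $L^2(\mathscr{C})$ and note that, since $\int_0^T\phi(s)\,dW(s)=\sum_{k=0}^{n-1}\phi(t_k)\big(W(t_{k+1})-W(t_k)\big)$ is a finite sum of operators, unravelling the definition of the integral shows that for each $i$ the vector $\big(\int_0^T\phi\,dW\big)e_i$ coincides with the scalar fermion stochastic integral of the simple adapted process $f^{(i)}(s):=\sum_k\big(\phi(t_k)e_i\big)\chi_{[t_k,t_{k+1})}(s)$ --- adapted because $\phi(t_k)e_i\in L^2(\mathscr{C}_{t_k})$. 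Then
\begin{equation*}
\Big\|\int_0^T\phi\,dW\Big\|_{\mathcal{L}_2}^2=\sum_i\Big\|\int_0^T f^{(i)}\,dW\Big\|_2^2=\sum_i\int_0^T\|f^{(i)}(s)\|_2^2\,ds=\sum_{k}(t_{k+1}-t_k)\sum_i\|\phi(t_k)e_i\|_2^2=\int_0^T\|\phi(s)\|_{\mathcal{L}_2}^2\,ds,
\end{equation*}
where I interchange the nonnegative sums by Tonelli and use $\sum_i\|\phi(t_k)e_i\|_2^2=\|\phi(t_k)\|_{\mathcal{L}_2}^2$.

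Alternatively --- the route I would take if the scalar isometry is to be reproved rather than cited --- I would expand the finite sum directly. Setting $\Delta W_k:=W(t_{k+1})-W(t_k)=\Psi(\chi_{[t_k,t_{k+1})})$, bilinearity of $\langle\cdot,\cdot\rangle_{\mathcal{L}_2}$ gives
\begin{equation*}
\Big\|\int_0^T\phi\,dW\Big\|_{\mathcal{L}_2}^2=\sum_{k=0}^{n-1}\|\phi(t_k)\Delta W_k\|_{\mathcal{L}_2}^2+\sum_{k\neq j}\big\langle\phi(t_k)\Delta W_k,\phi(t_j)\Delta W_j\big\rangle_{\mathcal{L}_2},
\end{equation*}
so it suffices to prove (i) $\|\phi(t_k)\Delta W_k\|_{\mathcal{L}_2}^2=(t_{k+1}-t_k)\|\phi(t_k)\|_{\mathcal{L}_2}^2$ for every $k$, and (ii) the vanishing of every off-diagonal term; then, summing (i) over $k$ and recalling $\phi=\sum_k\phi(t_k)\chi_{[t_k,t_{k+1})}$, the right-hand side is $\sum_k(t_{k+1}-t_k)\|\phi(t_k)\|_{\mathcal{L}_2}^2=\int_0^T\|\phi(s)\|_{\mathcal{L}_2}^2\,ds$. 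Part (i) is algebraic: by the CAR \eqref{CAR-Psi}, $\Delta W_k^*=\Delta W_k$ and $\Delta W_k^2=(t_{k+1}-t_k)I$, so $(t_{k+1}-t_k)^{-1/2}\Delta W_k$ is a self-adjoint unitary in $\mathscr{C}$; hence multiplication by $\Delta W_k$ on $L^2(\mathscr{C})$ is $(t_{k+1}-t_k)^{1/2}$ times a unitary operator, and composing a Hilbert--Schmidt operator with a unitary preserves its $\mathcal{L}_2$-norm.

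The hard part is (ii). Fixing $k\neq j$ and, by conjugate-symmetry of the inner product, assuming $t_k<t_j$, one has $t_{k+1}\le t_j$, hence $\Delta W_k\in\mathscr{C}_{t_j}$, whereas $\phi(t_k)$ and $\phi(t_j)$ are --- after extension by $0$ on $L^2(\mathscr{C}_{t_j})^{\perp}$ --- Hilbert--Schmidt operators on $L^2(\mathscr{C}_{t_j})$. Expanding $\big\langle\phi(t_k)\Delta W_k,\phi(t_j)\Delta W_j\big\rangle_{\mathcal{L}_2}=\textrm{tr}\big((\phi(t_k)\Delta W_k)^*\phi(t_j)\Delta W_j\big)$ over an orthonormal basis subordinate to $L^2(\mathscr{C})=L^2(\mathscr{C}_{t_j})\oplus L^2(\mathscr{C}_{t_j})^{\perp}$, and using that the $L^2$-projection onto $L^2(\mathscr{C}_{t_j})$ is the conditional expectation $m(\cdot\,|\,\mathscr{C}_{t_j})$ while $m$ is a central, tracial state, each surviving summand collapses to an expression $m\big(a\,\Delta W_j\big)$ with $a\in L^1(\mathscr{C}_{t_j})$, all the $\phi$- and $\Delta W_k$-factors having been absorbed into $\mathscr{C}_{t_j}$. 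The $L^1$--$L^\infty$ form of the conditional-expectation identity then gives $m(a\,\Delta W_j)=m\big(a\,m(\Delta W_j\,|\,\mathscr{C}_{t_j})\big)$, and $m(\Delta W_j\,|\,\mathscr{C}_{t_j})=0$ since $m$ is the Fock-vacuum state and $\textrm{ess supp}\,\chi_{[t_j,t_{j+1})}\cap[0,t_j]=\emptyset$ force $m(\Delta W_j\,g)=0$ for all $g\in\mathscr{C}_{t_j}$ --- that is, the increments of the fermion Brownian motion have vanishing conditional expectation given the past. Hence every off-diagonal term is $0$. I expect the one genuinely delicate point to be the bookkeeping: tracking the side on which each increment multiplies, so that the adapted factors really do land in $\mathscr{C}_{t_j}$, together with the $L^1$--$L^\infty$ duality used to isolate $\Delta W_j$; this is precisely the mechanism underlying the scalar fermion It\^{o} isometry invoked above.
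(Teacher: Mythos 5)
Your primary route breaks down at its very first identification. With the paper's definition, $\int_0^T\phi\,dW=\sum_k\phi(t_k)\,(W(t_{k+1})-W(t_k))$ is a composition of operators in which the increment, acting by multiplication, hits the vector \emph{before} $\phi(t_k)$ does; hence $\bigl(\int_0^T\phi\,dW\bigr)e_i=\sum_k\phi(t_k)\bigl((W(t_{k+1})-W(t_k))e_i\bigr)$, which is in general not the scalar integral $\sum_k\bigl(\phi(t_k)e_i\bigr)(W(t_{k+1})-W(t_k))$ of your $f^{(i)}$. Concretely, take the one-interval partition $\{0,T\}$, $\phi\equiv\langle\Omega,\cdot\rangle\,\Omega$ (the rank-one projection onto the vacuum, which lies in $\mathcal{L}_2(L^2(\mathscr{C}_t))$ for every $t$) and $e_i=\Omega$: then $\phi(W(T)\Omega)=m(W(T))\,\Omega=0$, whereas $(\phi\,\Omega)W(T)=W(T)\neq0$. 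Commuting the increment past the vector is precisely the step at which the paper invokes the commutation relation of \cite[Lemma 3.15]{B.S.W.1} and pays the parity operator $\Upsilon$: its scalar integrand is $\phi(\cdot)\Upsilon e_k$, not $\phi(\cdot)e_k$, and even that exchange rests on the CAR \eqref{CAR-Psi}, which gives $(W(t_{k+1})-W(t_k))x=\Upsilon(x)(W(t_{k+1})-W(t_k))$ only when $x$ is built from fields supported before $t_k$ (for $x$ proportional to the increment itself the two sides have opposite signs). So ``unravelling the definition'' does not yield your identity; this is a genuine gap in the first route, and repairing it essentially forces you into the diagonal/off-diagonal analysis anyway.

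Your alternative route, by contrast, is sound and can be completed as sketched. The diagonal terms are exactly as you say: since $m$ is a trace, multiplication by the self-adjoint unitary $(t_{k+1}-t_k)^{-1/2}(W(t_{k+1})-W(t_k))$ is unitary on $L^2(\mathscr{C})$, and composing a Hilbert--Schmidt operator with a unitary preserves the $\mathcal{L}_2$-norm. For the off-diagonal terms, the combination of adaptedness ($W(t_{k+1})-W(t_k)\in\mathscr{C}_{t_j}$ and $\phi(t_k),\phi(t_j)$ annihilating $L^2(\mathscr{C}_{t_j})^{\perp}$ after extension by zero), traciality of $m$, and $m\bigl(W(t_{j+1})-W(t_j)\,\big|\,\mathscr{C}_{t_j}\bigr)=0$ (your vacuum computation is the right justification) does make every summand vanish, and the rearrangements over the basis are licit because $\phi(t_k)^*\phi(t_j)$ is trace class. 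This is a genuinely different organization from the paper's proof, which expands the $\mathcal{L}_2$-norm over an orthonormal basis and reduces the whole identity to the scalar It\^{o}--Clifford isometry \cite[Theorem 3.5(c)]{B.S.W.1} after the $\Upsilon$-commutation; what your second route buys is a self-contained operator-level argument that never needs Lemma 3.15 or the parity twist, at the cost of the $L^1$--$L^\infty$ conditional-expectation bookkeeping you flag. As submitted, only the second route constitutes a proof; if you want to keep the first, you must insert the $\Upsilon$-twist and justify the exchange of the increment with non-adapted basis vectors.
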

\begin{proof} Since $\phi$ is a simple adapted $\mathcal{L}_2(L^2(\mathscr{C}))$-process on $[0,T]$, we can obtain that
\begin{equation*}
\begin{aligned}
&\left\|\int_0^T\phi(s)dW(s)\right\|^2_{\mathcal{L}_2}\\
=&\textrm{tr}\left(\left(\int_0^T\phi(s)dW(s)\right)^*\left(\int_0^T\phi(s)dW(s)\right)\right)\\
=&\sum_{i,j=0}^{n-1}\sum_{k=1}^{\infty}\langle e_k,\ \{\phi(t_i)(W(t_{i+1})-W(t_i))\}^*\{\phi(t_j)(W(t_{j+1})-W(t_j))\}e_k\rangle\\
=&\sum_{i,j=0}^{n-1}\sum_{k=1}^{\infty}\langle\phi(t_i)(W(t_{i+1})-W(t_i)) e_k,\ \phi(t_j)(W(t_{j+1})-W(t_j)) e_k\rangle\\
=&\sum_{i,j=0}^{n-1}\sum_{k=1}^{\infty}\langle\phi(t_i)\Upsilon e_k (W(t_{i+1})-W(t_i)),\ \phi(t_j)\Upsilon e_k (W(t_{j+1})-W(t_j))\rangle\\
=&\sum_{i=0}^{n-1}\textrm{tr}(\phi(t_i)^*\phi(t_i))(t_{i+1}-t_i)\\
=&\int_0^T\|\phi(t)\|_{\mathcal{L}_2}^2dt,
\end{aligned}
\end{equation*}
where  $\{e_j\}_{j=1}^\infty$ is an orthonormal basis of $L^2(\mathscr{C})$, and the fourth equality and the fifth equality come from \cite[Lemma 3.15 and Theorem 3.5(c)]{B.S.W.1}, respectively.
\end{proof} 

\begin{defn}
For $p\in[1,\infty)$, let $L^p_\mathbb{A}([0,T];\mathcal{L}_2(L^2(\mathscr{C})))$ be the completion of all simple adapted $\mathcal{L}_2(L^2(\mathscr{C}))$-processes with the norm 
$\|\phi\|_{L^p_\mathbb{A}([0,T];\mathcal{L}_2)}:=\left(\int_{0}^{T}\|\phi(t)\|^p_{\mathcal{L}_2}dt\right)^\frac{1}{p}.$
 $L^\infty_\mathbb{A}([0,T];\mathcal{L}_2(L^2(\mathscr{C})))$ is the space of all adapted measurable processes $\phi(\cdot)$ such that $\phi(\cdot):[0,T]\to\mathcal{L}_2(L^2(\mathscr{C}))$ with the norm  $\|\phi\|_{L^\infty_\mathbb{A}([0,T];\mathcal{L}_2)}:={\rm{ess}} \sup_{t\in[0,T]}\|\phi(t)\|_{\mathcal{L}_2}$. 
 $C_\mathbb{A}([0,T];$ $\mathcal{L}_2(L^2(\mathscr{C})))$ is the space of all adapted processes $\phi(\cdot)$ such that {\small$\phi(\cdot):[0,T]\to\mathcal{L}_2(L^2(\mathscr{C}))$} is continuous, with the norm $\|\phi\|_{C_\mathbb{A}([0,T];\mathcal{L}_2)}:=\sup\limits_{t\in[0,T]}\|\phi(t)\|_{\mathcal{L}_2}.$
\end{defn}

 By Lemma \ref{ito isomerty of stochastic integral valued in H-S},
 the  operator-valued stochastic integral $\int_0^T\Phi(t)dW(t)$ is well-defined for any $\Phi(\cdot)\in L_\mathbb{A}^2([0,T];\mathcal{L}_2(L^2(\mathscr{C})))$, and
\begin{equation}\label{isomertry of H-S operator}
\left\|\int_0^T\Phi(t)dW(t)\right\|_{\mathcal{L}_2}^2=\int_{0}^{T}\|\Phi(t)\|_{\mathcal{L}_2}^2dt.
\end{equation}

In a similar way,  a simple adapted $\mathcal{L}(L^2(\mathscr{C}))$-process on $[0,T]$ is a map $\phi:[0,T]\to \mathcal{L}(L^2(\mathscr{C}))$ such that $\phi(t)\in \mathcal{L}(L^2(\mathscr{C}_t))$ for all $t\in[0,T]$, and  $\phi=\sum_{k\geq 0}\phi(t_k)\chi_{[t_k,t_{k+1})}$,
where $\{t_k\}_{k=0}^n$ is a partition of $[0,T]$ and $\phi(t_k)\in \mathcal{L}(L^2(\mathscr{C}_{t_k}))$, $0\leq k\leq n-1$. The space of simple adapted $\mathcal{L}(L^2(\mathscr{C}))$-processes is a normed space with the norm {\small$\|\phi\|_{L^p_\mathbb{A}([0,T];\mathcal{L})}:=\left(\int_{0}^{T}\|\phi(t)\|^p_{\mathcal{L}}dt\right)^\frac{1}{p}$}, and let $L^p_\mathbb{A}([0,T];\mathcal{L}(L^2(\mathscr{C})))$ be its completion. Moreover, let
\begin{equation*}
\begin{aligned}
L^\infty_\mathbb{A}([0,T];\mathcal{L}(L^2(\mathscr{C}))):=&\Big\{\phi:[0,T]\to \mathcal{L}(L^2(\mathscr{C})); \phi(\cdot) \textrm{ is  adapted, measurable }\\
&\indent\textrm{and }\|\phi\|_{L^\infty_\mathbb{A}([0,T];\mathcal{L})}:={{\rm{ess}} \sup}_{t\in[0,T]}
\|\phi(t)\|_{\mathcal{L}}<\infty\Big\},\\
C_\mathbb{A}([0,T];\mathcal{L}(L^2(\mathscr{C})))
:=&\Big\{\phi:[0,T]\to \mathcal{L}(L^2(\mathscr{C})); \phi(\cdot) \textrm{ is  adapted, continuous}\\
&\indent\indent\textrm{ and }\|\phi\|_{C_\mathbb{A}([0,T];\mathcal{L})}:=\sup_{t\in[0,T]}\|\phi(t)\|_{\mathcal{L}}<\infty\Big\}.
\end{aligned}
\end{equation*}

\begin{thm}\label{Riesz representation}
For  $p\in[1,\infty)$ with $\frac{1}{p}+\frac{1}{q}=1$, it holds that
\begin{equation}\label{dual relation}
\left(L^p_\mathbb{A}([0,T];\mathcal{L}_2(L^2(\mathscr{C})))\right)^*= L^q_\mathbb{A}([0,T];\mathcal{L}_2(L^2(\mathscr{C}))).
\end{equation}
\end{thm}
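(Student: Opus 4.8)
The plan is to reduce the statement to the classical duality for Bochner $L^p$-spaces valued in a separable Hilbert space, and then to handle the adaptedness constraint by means of the orthogonal projections onto the subspaces $\mathcal{L}_2(L^2(\mathscr{C}_t))$. Write $H:=\mathcal{L}_2(L^2(\mathscr{C}))$, a separable Hilbert space, and for $t\in[0,T]$ let $\pi_t:H\to H$ be the orthogonal projection onto the closed subspace $\mathcal{L}_2(L^2(\mathscr{C}_t))$; since $\mathscr{C}_s\subseteq\mathscr{C}_t$ for $s\leq t$, the family $\{\pi_t\}_{t\in[0,T]}$ is increasing. Because $L^p([0,T];H)$ is complete and contains the simple adapted $H$-processes isometrically, $L^p_\mathbb{A}([0,T];H)$ is identified with the closure of those processes in $L^p([0,T];H)$; passing such closures through a subsequence that converges a.e. and using that each $\mathcal{L}_2(L^2(\mathscr{C}_t))$ is closed, every $f\in L^p_\mathbb{A}([0,T];H)$ satisfies $\pi_t f(t)=f(t)$ for a.e.\ $t$.

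First I would establish the isometric inclusion $L^q_\mathbb{A}([0,T];H)\hookrightarrow\big(L^p_\mathbb{A}([0,T];H)\big)^*$ via $g\mapsto\Lambda_g$, where $\Lambda_g(f):=\int_0^T\langle f(t),g(t)\rangle_{\mathcal{L}_2}\,dt$. Boundedness with $\|\Lambda_g\|\leq\|g\|_{L^q_\mathbb{A}}$ follows from the Cauchy--Schwarz inequality in $H$ and Hölder's inequality in $t$. For the reverse bound, test against $f(t):=\|g(t)\|_{\mathcal{L}_2}^{q-2}g(t)$ (with the usual modification when $q=\infty$): this $f$ is adapted because $g$ is, it lies in $L^p_\mathbb{A}([0,T];H)$ with $\|f\|_{L^p_\mathbb{A}}=\|g\|_{L^q_\mathbb{A}}^{q/p}$, and $\Lambda_g(f)=\|g\|_{L^q_\mathbb{A}}^q$, so $\|\Lambda_g\|\geq\|g\|_{L^q_\mathbb{A}}$.

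The substance is surjectivity. Given $\Lambda\in\big(L^p_\mathbb{A}([0,T];H)\big)^*$, extend it by the Hahn--Banach theorem to $\widetilde\Lambda\in\big(L^p([0,T];H)\big)^*$ with $\|\widetilde\Lambda\|=\|\Lambda\|$. Since $H$ is a separable Hilbert space, $H^*\cong H$ is reflexive and has the Radon--Nikodym property, so the classical representation theorem for the dual of a vector-valued $L^p$-space yields $g\in L^q([0,T];H)$ with $\widetilde\Lambda(f)=\int_0^T\langle f(t),g(t)\rangle_{\mathcal{L}_2}\,dt$ for all $f\in L^p([0,T];H)$ and $\|g\|_{L^q}=\|\Lambda\|$. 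Set $\widetilde g(t):=\pi_t g(t)$. For $f\in L^p_\mathbb{A}([0,T];H)$ we have $\pi_t f(t)=f(t)$ a.e., hence $\langle f(t),g(t)\rangle_{\mathcal{L}_2}=\langle\pi_t f(t),g(t)\rangle_{\mathcal{L}_2}=\langle f(t),\pi_t g(t)\rangle_{\mathcal{L}_2}=\langle f(t),\widetilde g(t)\rangle_{\mathcal{L}_2}$ a.e., so $\Lambda(f)=\widetilde\Lambda(f)=\int_0^T\langle f(t),\widetilde g(t)\rangle_{\mathcal{L}_2}\,dt$. Moreover $\|\widetilde g(t)\|_{\mathcal{L}_2}\leq\|g(t)\|_{\mathcal{L}_2}$ pointwise, so $\|\widetilde g\|_{L^q}\leq\|\Lambda\|$; together with the isometry of the previous step this forces $\|\widetilde g\|_{L^q_\mathbb{A}}=\|\Lambda\|$.

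It remains to verify that $\widetilde g$ genuinely belongs to $L^q_\mathbb{A}([0,T];H)$, i.e.\ that $t\mapsto\pi_t g(t)$ is Bochner measurable; adaptedness holds by construction, so measurability is the point, and I expect this to be the main technical obstacle. For $g=\xi\chi_{[a,b)}$ with $\xi\in H$, polarization gives $\langle\pi_t\xi,\eta\rangle_{\mathcal{L}_2}=\langle\pi_t\xi,\pi_t\eta\rangle_{\mathcal{L}_2}$ as a finite linear combination of the maps $t\mapsto\|\pi_t w\|_{\mathcal{L}_2}^2$, each of which is nondecreasing and hence Borel measurable; thus $t\mapsto\pi_t\xi$ is weakly measurable and, $H$ being separable, Bochner measurable by Pettis' theorem, so $\mathcal{P}(\xi\chi_{[a,b)}):=\pi_{(\cdot)}\xi\,\chi_{[a,b)}(\cdot)\in L^q_\mathbb{A}([0,T];H)$ with norm at most $\|\xi\chi_{[a,b)}\|_{L^q}$. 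Extending $\mathcal{P}$ linearly to simple functions and then, by this contraction estimate and density of simple functions in $L^q([0,T];H)$ (for $q<\infty$; for $q=\infty$ use $g\in L^r$ for every finite $r$ together with an a.e.\ approximation), one obtains a norm-one operator $\mathcal{P}:L^q([0,T];H)\to L^q_\mathbb{A}([0,T];H)$ with $(\mathcal{P}g)(t)=\pi_t g(t)$ a.e. In particular $\widetilde g=\mathcal{P}g\in L^q_\mathbb{A}([0,T];H)$, which completes the identification and the norm equality.
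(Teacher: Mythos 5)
Your proposal is correct and follows essentially the same route as the paper: extend the functional by Hahn--Banach to all of $L^p([0,T];\mathcal{L}_2(L^2(\mathscr{C})))$, invoke the classical duality for Bochner spaces with a separable Hilbert fibre, and replace the representing function $g$ by its adapted projection $t\mapsto\mathcal{P}_t g(t)$, using self-adjointness of $\mathcal{P}_t$ together with $\mathcal{P}_t f(t)=f(t)$ for adapted $f$. The only divergence is in a sub-step: where the paper obtains the measurability and pointwise identification of $t\mapsto\mathcal{P}_t g(t)$ by extending the blockwise projection operator $\mathbb{E}$ from simple functions and passing to an a.e.\ convergent subsequence, you prove it directly via monotonicity of $t\mapsto\|\mathcal{P}_t w\|_{\mathcal{L}_2}^2$, polarization and Pettis' theorem, which is, if anything, a more explicit justification of the same fact.
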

\begin{proof} 
For any $t\in[0,T]$, let $\mathcal{P}_t:\mathcal{L}_2(L^2(\mathscr{C}))\to \mathcal{L}_2(L^2(\mathscr{C}_t))$ be the orthogonal projection. 
Let 
\begin{equation*}
 \mathfrak{S}:=\left\{\sum_{k}a_k\chi_{[t_k,t_{k+1})}; a_k\in \mathcal{L}_2(L^2(\mathscr{C}))\ \textrm{and}\ \{t_k\}_{k=0}^n \textrm{ is a partition of } [0,T]\right\},
\end{equation*}
which is dense in $L^p([0,T];\mathcal{L}_2(L^2(\mathscr{C})))$. Denote $\mathbb{E}:\mathfrak{S} \to L_\mathbb{A}^p([0,T];\mathcal{L}_2(L^2(\mathscr{C})))$ by
\begin{equation*}
(\mathbb{E}h)(t)=\sum_{k}(\mathcal{P}_{t_k}a_k)\chi_{[t_k,t_{k+1})}(t),\  t\in[0,T],
\end{equation*}
then $\|\mathbb{E}\|\leq 1$. Therefore, $\mathbb{E}$ can be extended to 
\begin{equation*}
\widehat{\mathbb{E}}:L^p([0,T];\mathcal{L}_2(L^2(\mathscr{C})))\to L_\mathbb{A}^p([0,T];\mathcal{L}_2(L^2(\mathscr{C})))
\end{equation*}
 with $\left\|\widehat{\mathbb{E}}\right\|\leq 1$. For any $h\in L^p([0,T];\mathcal{L}_2(L^2(\mathscr{C})))$, there exists a sequence $\{
h_n\}_{n\geq0}\subseteq \mathfrak{S}$ such that 
\begin{equation*}
\lim\limits_{n\to\infty}\|h_n-h\|_{L^p([0,T];\mathcal{L}_2)}=0.
\end{equation*}
Then there exists a subsequence $\{h_{n_k}\}_{n_k\geq 0}\subseteq\{
h_n\}_{n\geq0} $ such that $h_{n_k}\to h$, a.e.    
Obviously, $\mathbb{E}h_{n_k}\to \widehat{\mathbb{E}}h\in L_\mathbb{A}^1([0,T];\mathcal{L}_2(L^2(\mathscr{C})))$.
Besides, 
\begin{equation*}
\left(\mathbb{E}h_{n_k}\right)(t)=\mathcal{P}_th_{n_k}(t)\to \mathcal{P}_th(t),\ \textrm{a.e.}\ t\in[0,T].
\end{equation*}
 Therefore, 
\begin{equation*}
   (\widehat{\mathbb{E}}h)(t)=\mathcal{P}_th(t),\ \textrm{a.e.}\ t\in[0,T].
 \end{equation*}
 
For any $\ell\in \left(L^p_\mathbb{A}([0,T];\mathcal{L}_2(L^2(\mathscr{C})))\right)^*$, 
by Hahn-Banach Theorem, there exists $\widehat{\ell}\in \left(L^p([0,T];\mathcal{L}_2(L^2(\mathscr{C})))\right)^*$ such that
\begin{equation*}
\ell=\widehat{\ell}|_{L^p_\mathbb{A}([0,T];\mathcal{L}_2(L^2(\mathscr{C})))}, \quad \|\ell\|_{\left(L^p_\mathbb{A}([0,T];\mathcal{L}_2(L^2(\mathscr{C})))\right)^*}=\left\|\widehat{\ell}\right\|_{\left(L^p([0,T];\mathcal{L}_2(L^2(\mathscr{C})))\right)^*}.
\end{equation*}
In view of 
\begin{equation*}
 \left( L^p([0,T];\mathcal{L}_2(L^2(\mathscr{C})))\right)^*=  L^q([0,T];\mathcal{L}_2(L^2(\mathscr{C}))),
\end{equation*}
thus, there exists $g\in L^q([0,T];\mathcal{L}_2(L^2(\mathscr{C})))$ such that
\begin{equation*}
\widehat{\ell}(f)=\int_0^T\langle g(t), f(t)\rangle_{\mathcal{L}_2}dt,\quad  f\in L^p([0,T];\mathcal{L}_2(L^2(\mathscr{C}))),
\end{equation*} 
and 
\begin{equation*}
\left\|\widehat{\ell}\right\|_{ ( L^p([0,T];\mathcal{L}_2(L^2(\mathscr{C}))))^*}=\|g\|_{L^q([0,T];\mathcal{L}_2)}.
\end{equation*}
 Then for any  $f\in L^p_\mathbb{A}([0,T];\mathcal{L}_2(L^2(\mathscr{C})))$, 
\begin{equation}\label{widehat(ell)}
\ell(f)= \widehat{\ell}(f)=\int_0^T\langle g(t),f(t)\rangle_{\mathcal{L}_2}dt=\int_0^T\left\langle\left(\widehat{\mathbb{E}}g\right)(t), f(t)\right\rangle_{\mathcal{L}_2}dt.
\end{equation}
For  $g\in L^q([0,T];\mathcal{L}_2(L^2(\mathscr{C})))\subseteq L^1([0,T];\mathcal{L}_2(L^2(\mathscr{C})))$, 
$\widehat{\mathbb{E}}g\in L^1_\mathbb{A}([0,T];\mathcal{L}_2(L^2(\mathscr{C})))$ is measurable. Further, 
$\left\|\widehat{\mathbb{E}}g\right\|_{L^q([0,T];\mathcal{L}_2)}\leq \|g\|_{L^q([0,T];\mathcal{L}_2)}$ and $\left(\widehat{\mathbb{E}}g\right)(t)\in\mathcal{L}_2(L^2(\mathscr{C}_t))$ a.e. $t\in[0,T]$, thus,  
$\widehat{\mathbb{E}}g\in L^q_\mathbb{A}([0,T];\mathcal{L}_2(L^2(\mathscr{C})))$.

By \eqref{widehat(ell)} and H\"{o}lder inequality, it is obvious that 
\begin{equation*}
\|\ell\|_{\left(L^p_\mathbb{A}([0,T];\mathcal{L}_2(L^2(\mathscr{C})))\right)^*}\leq \left\|\widehat{\mathbb{E}}g\right\|_{L^q_\mathbb{A}([0,T];\mathcal{L}_2)}.
\end{equation*}
On the other hand,
\begin{equation*}
\|\ell\|_{\left(L^p([0,T];\mathcal{L}_2(L^2(\mathscr{C})))\right)^*}=\left\|\widehat{\ell}\right\|_{\left(L^p_\mathbb{A}([0,T];\mathcal{L}_2(L^2(\mathscr{C})))\right)^*}=\|g\|_{L^q_\mathbb{A}([0,T];\mathcal{L}_2)}\geq\left\|\widehat{\mathbb{E}}g\right\|_{L^q_\mathbb{A}([0,T];\mathcal{L}_2)}.
\end{equation*}
Then, $\|\ell\|_{\left(L^p_\mathbb{A}([0,T];\mathcal{L}_2(L^2(\mathscr{C})))\right)^*}=\left\|\widehat{\mathbb{E}}g\right\|_{L^q_\mathbb{A}([0,T];\mathcal{L}_2)}$. Hence, \eqref{dual relation} holds.
\end{proof}

Combined with the It\^{o} isometry of Clifford martingales \cite{B.S.W.1,B.S.W.2,B.S.W.3,P.X} in $L^2(\mathscr{C})$ and CAR \eqref{CAR-Psi}, we have the following results about the solution to \eqref{QSDE-Forward-1}.
\begin{lem}\label{the relationship between solution and coefficients}
For any $D,F\in L^2_\mathbb{A}([0,T];\mathcal{L}(L^2(\mathscr{C})))$, let
\begin{equation}\label{the definition of M_(D,F)}
  M_{D,F,2}(\cdot):=\|D(\cdot)\|^2_{\mathcal{L}}+\|F(\cdot)\|^2_{\mathcal{L}}.
\end{equation}
Then for any $t_0\in[0,T]$, the following conclusions hold:\\
\textrm{(1)} If $u_1=v_1=0$ in \eqref{QSDE-Forward-1}, then there exists an operator 
\begin{equation*}
U(\cdot,t_0)\in \mathcal{L}(L^2(\mathscr{C}_{t_0});C_\mathbb{A}([t_0,T];L^2(\mathscr{C})))
\end{equation*}
such that the solution to \eqref{QSDE-Forward-1} can be represented as $x_1(\cdot)=U(\cdot,t_0)\xi_1$. Further, for any $t\in[0,T)$, $\xi_1\in L^2(\mathscr{C}_{t_0})$ and $\varepsilon>0$, there is a $\delta \in[0,T-t_0)$ such that for any $t\in[t_0,t_0+\delta]\subseteq [t_0,T]$, it holds that
\begin{equation}\label{u_1=v_1=0}
  \|U(\cdot,t_0)\xi_1-U(\cdot,t)\xi_1\|_{L^\infty([0,T];L^2(\mathscr{C}))}<\varepsilon.
 \end{equation}
\textrm{(2)} If $\xi_1=0$ and $v_1=0$ in \eqref{QSDE-Forward-1}, then there exists an operator 
\begin{equation*}
V(\cdot,t_0)\in \mathcal{L}(L^2_\mathbb{A}([t_0,T]; L^2(\mathscr{C}));C_\mathbb{A}([t_0,T];L^2(\mathscr{C})))
\end{equation*}
 such that the solution to \eqref{QSDE-Forward-1} can be represented as $x_1(\cdot)=V(\cdot,t_0)u_1$.\\
\textrm{(3)}If $\xi_1=0$ and $u_1=0$ in \eqref{QSDE-Forward-1}, then there exists an operator 
\begin{equation*}
\Xi(\cdot,t_0)\in \mathcal{L}(L^2_\mathbb{A}([t_0,T];L^2(\mathscr{C}));C_\mathbb{A}([t_0,T];L^2(\mathscr{C})))
\end{equation*}
 such that the solution to \eqref{QSDE-Forward-1} can be represented as $x_1(\cdot)=\Xi(\cdot,t_0)v_1$.
\end{lem}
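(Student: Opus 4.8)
The plan is to exploit that \eqref{QSDE-Forward-1} is \emph{linear} in the data $(\xi_1,u_1,v_1)$: by the uniqueness part of \cite[Theorem 2.1]{B.S.W.2} the solution $x_1(\cdot)$ depends linearly on $(\xi_1,u_1,v_1)$, so I would \emph{define} $U(\cdot,t_0)\xi_1$ as the solution of \eqref{QSDE-Forward-1} with $u_1=v_1=0$, $V(\cdot,t_0)u_1$ as the solution with $\xi_1=v_1=0$, and $\Xi(\cdot,t_0)v_1$ as the solution with $\xi_1=u_1=0$; superposition then makes each of these maps linear and gives $x_1(\cdot)=U(\cdot,t_0)\xi_1+V(\cdot,t_0)u_1+\Xi(\cdot,t_0)v_1$. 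Since \cite[Theorem 2.1]{B.S.W.2} already puts $x_1(\cdot)\in C_\mathbb{A}([t_0,T];L^2(\mathscr{C}))$, it remains to prove (i) that these three linear maps are \emph{bounded}, with norms controlled by $M_{D,F,2}$, and (ii) the continuity-in-initial-time bound \eqref{u_1=v_1=0}.

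For (i) I would run the standard energy estimate. Writing \eqref{QSDE-Forward-1} in integral form, applying $\|\cdot\|_2$, the triangle inequality for the Bochner integral, the It\^{o} isometry of Clifford martingales in $L^2(\mathscr{C})$ (cf. \cite{B.S.W.1,B.S.W.2}) for the stochastic term, the bounds $\|D(s)x_1(s)\|_2\le\|D(s)\|_{\mathcal{L}}\|x_1(s)\|_2$ and $\|F(s)x_1(s)\|_2\le\|F(s)\|_{\mathcal{L}}\|x_1(s)\|_2$, and Cauchy--Schwarz in the time variable, one gets for all $t\in[t_0,T]$
\begin{equation*}
\|x_1(t)\|_2^2\le \mathcal{C}\Big(\|\xi_1\|_2^2+\|u_1\|_{L^2_\mathbb{A}([t_0,T];L^2(\mathscr{C}))}^2+\|v_1\|_{L^2_\mathbb{A}([t_0,T];L^2(\mathscr{C}))}^2+\int_{t_0}^t M_{D,F,2}(s)\|x_1(s)\|_2^2\,ds\Big).
\end{equation*}
Since $D,F\in L^2_\mathbb{A}([0,T];\mathcal{L}(L^2(\mathscr{C})))$ forces $M_{D,F,2}\in L^1([0,T])$ and the right-hand side is nondecreasing in $t$, Gr\"onwall's inequality gives
\begin{equation*}
\sup_{t\in[t_0,T]}\|x_1(t)\|_2^2\le \mathcal{C}\,e^{\mathcal{C}\int_0^T M_{D,F,2}(s)\,ds}\big(\|\xi_1\|_2^2+\|u_1\|_{L^2_\mathbb{A}([t_0,T];L^2(\mathscr{C}))}^2+\|v_1\|_{L^2_\mathbb{A}([t_0,T];L^2(\mathscr{C}))}^2\big).
\end{equation*}
Specialising to the three data configurations yields the boundedness of $U(\cdot,t_0)$, $V(\cdot,t_0)$, $\Xi(\cdot,t_0)$, and in particular settles (2) and (3).

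For (ii), note that $U(\cdot,t)\xi_1$ is defined for every $t\in[t_0,T]$ because $\xi_1\in L^2(\mathscr{C}_{t_0})\subseteq L^2(\mathscr{C}_t)$, and regard it as an element of $L^\infty([0,T];L^2(\mathscr{C}))$ by setting $U(s,t)\xi_1:=\xi_1$ for $s\le t$. Uniqueness in \cite[Theorem 2.1]{B.S.W.2} yields the flow identity $U(s,t_0)\xi_1=U(s,t)\big(U(t,t_0)\xi_1\big)$ for $t_0\le t\le s\le T$, since $s\mapsto U(s,t_0)\xi_1$ restricted to $[t,T]$ solves \eqref{QSDE-Forward-1} (with $u_1=v_1=0$) on $[t,T]$ with datum $U(t,t_0)\xi_1$ at time $t$. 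Fixing $t_0$, $\xi_1$, $\varepsilon>0$ and splitting $[0,T]=[0,t_0]\cup[t_0,t]\cup[t,T]$ for $t\in[t_0,t_0+\delta]$: on $[0,t_0]$ both functions equal $\xi_1$; on $[t_0,t]$ one has $U(s,t)\xi_1=\xi_1$, so the difference equals $U(s,t_0)\xi_1-\xi_1$, small uniformly for $s\in[t_0,t]$ by (uniform) continuity of $s\mapsto U(s,t_0)\xi_1$ at $s=t_0$; on $[t,T]$ the flow identity gives $U(s,t_0)\xi_1-U(s,t)\xi_1=U(s,t)\big(U(t,t_0)\xi_1-\xi_1\big)$, so the estimate of (i) applied on $[t,T]$ gives
\begin{equation*}
\sup_{s\in[t,T]}\big\|U(s,t_0)\xi_1-U(s,t)\xi_1\big\|_2\le \mathcal{C}\,e^{\mathcal{C}\int_0^T M_{D,F,2}(s)\,ds}\,\big\|U(t,t_0)\xi_1-\xi_1\big\|_2,
\end{equation*}
again small. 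Choosing $\delta$ so that $\sup_{s\in[t_0,t_0+\delta]}\|U(s,t_0)\xi_1-\xi_1\|_2$ lies below $\varepsilon$ divided by $1+\mathcal{C}e^{\mathcal{C}\int_0^T M_{D,F,2}(s)\,ds}$ establishes \eqref{u_1=v_1=0}.

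\textbf{Main obstacle.} Parts (2), (3) and the boundedness in (1) are routine once the Clifford It\^{o} isometry and the integrability $M_{D,F,2}\in L^1([0,T])$ are in hand. The genuinely delicate step is the moving-initial-time continuity \eqref{u_1=v_1=0}: it rests on the flow identity (easy from uniqueness, but requiring the adaptedness bookkeeping $L^2(\mathscr{C}_{t_0})\subseteq L^2(\mathscr{C}_t)$), on the continuity of the solution path at its own starting time, and crucially on the Gr\"onwall constant being uniform over the auxiliary starting times $t\in[t_0,T]$ — which it is, because it enters only through $\int_0^T M_{D,F,2}(s)\,ds$.
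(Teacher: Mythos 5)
Your proof is correct, and for the boundedness part it coincides with the paper's (define the solution maps via \cite[Theorem 2.1]{B.S.W.2}, then energy estimate with the Clifford It\^{o} isometry, Cauchy--Schwarz in time and Gronwall, using $M_{D,F,2}\in L^1([0,T])$; the paper likewise only sketches (2)--(3) by reference to \cite[Lemma 12.10]{L.Z-2020}). Where you genuinely diverge is the delicate step \eqref{u_1=v_1=0}: the paper never invokes the flow identity; it subtracts the two integral equations for $U(\cdot,t)\xi_1$ and $U(\cdot,t_0)\xi_1$, estimates the difference directly and applies Gronwall to it, which produces the explicit modulus $\|\{U(r,t)-U(r,t_0)\}\xi_1\|_2^2\leq \mathcal{C}e^{\mathcal{C}\int_0^T M_{D,F,2}(s)ds}\,\|\xi_1\|_2^2\int_{t_0}^{t}M_{D,F,2}(s)ds$, hence a $\delta$ that is uniform in the starting time and in $\|\xi_1\|_2\leq 1$ (by absolute continuity of the integral of $M_{D,F,2}$) --- which is the uniform form actually used later in \eqref{U-tau-sigma}. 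You instead factor the difference through the evolution property, $U(s,t_0)\xi_1-U(s,t)\xi_1=U(s,t)\bigl(U(t,t_0)\xi_1-\xi_1\bigr)$, and combine the $t$-uniform operator bound with continuity of the solution path at its initial time; this is cleaner, makes the role of the uniform Gronwall constant transparent, and also handles explicitly the extension of $U(\cdot,t)\xi_1$ to all of $[0,T]$ needed to interpret the $L^\infty([0,T];L^2(\mathscr{C}))$ norm, a point the paper leaves implicit. The price is that your $\delta$ depends on $\xi_1$ and $t_0$ through the continuity modulus of $s\mapsto U(s,t_0)\xi_1$ at $s=t_0$, which proves the lemma as stated but not the uniform variant; that gap closes immediately by bounding $\|U(t,t_0)\xi_1-\xi_1\|_2^2\leq\mathcal{C}\int_{t_0}^{t}M_{D,F,2}(s)\,ds\,\|\xi_1\|_2^2$ from the integral equation on $[t_0,t]$, after which your route and the paper's deliver the same quantitative content.
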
 
\begin{proof} The proof is similar to that of \cite[Lemma 12.10]{L.Z-2020}. For the reader's convenience, we only give the proof of $(1)$.

Define $U(\cdot,t_0):L^2(\mathscr{C}_{t_0})\rightarrow C_\mathbb{A}([t_0,T];L^2(\mathscr{C}))$ by
\begin{equation*}
  U(t, t_0)\xi_1=x_1(t),\quad t\in[t_0,T],
\end{equation*}
where $x_1(t)=U(t,t_0)\xi_1$ is the solution to \eqref{QSDE-Forward-1} with $u_1=v_1=0$.
It follows from \cite[Theorem 2.1]{B.S.W.2} that
\begin{equation*}
 \|x_1(\cdot)\|_{C_\mathbb{A}([t_0,T];L^2(\mathscr{C}))}^2\leq \mathcal{C}\|\xi_1\|_2^2.
\end{equation*}
This shows that $U(\cdot,t_0)$ is bounded operator from $L^2(\mathscr{C}_{t_0})$ to $C_\mathbb{A}([t_0,T];L^2(\mathscr{C}))$. 

Furthermore, by the definition of $U(\cdot,t_0)$ and $U(\cdot,t)$, for any $t_0\leq t\leq r\leq T$, we have that 
\begin{equation*}
U(r,t)\xi_1
=\xi_1+\int_t^rD(s)U(s,t)\xi_1ds+\int_t^rF(s)U(s,t)\xi_1dW(s),
\end{equation*}
and
\begin{equation*}
U(r,t_0)\xi_1
=\xi_1+\int_{t_0}^rD(s)U(s,t_0)\xi_1ds+\int_{t_0}^rF(s)U(s,t_0)\xi_1dW(s).
\end{equation*}
Hence,
\begin{equation}\label{the sum of U(r,t)-U(r,t_0)}
\begin{aligned}
&\|\{U(r,t)-U(r,t_0)\}\xi_1\|_2\\
\leq&\left\|\int_t^r D(s)\{U(s,t)-U(s,t_0)\}\xi_1ds\right\|_2+\left\|\int_{t_0}^tD(s)U(s,t_0)\xi_1ds\right\|_2\\
&+\left\|\int_t^rF(s)\{U(s,t)-U(s,t_0)\}\xi_1dW(s)\right\|_2+\left\|\int_{t_0}^tF(s)U(s,t_0)\xi_1dW(s)\right\|_2.
\end{aligned}
\end{equation}
By the H\"{o}lder inequality, we obtain that
\begin{equation}\label{drift-plus}
\begin{aligned}
\left\|\int_t^rD(s)\{U(s,t)-U(s,t_0)\}\xi_1ds\right\|_2^2
\leq&\left(\int_t^r\left\|D(s)\{U(s,t)-U(s,t_0)\}\xi_1\right\|_2 ds\right)^2\\
\leq& \mathcal{C}\int_t^r\left\|D(s)\right\|^2_{\mathcal{L}}\left\|\{U(s,t)-U(s,t_0)\}\xi_1\right\|_2^2 ds,
\end{aligned}
\end{equation}
and
\begin{equation}\label{drift}
\left\|\int_{t_0}^tD(s)U(s,t_0)\xi_1ds\right\|_2^2 \leq\mathcal{C}\int_{t_0}^t\|D(s)\|^2_{\mathcal{L}}\left\|U(s,t_0)\xi_1\right\|_2^2 ds.
\end{equation}
By using the It\^{o} isometry  of Clifford martingales \cite[Theorem 3.5]{B.S.W.1}, we have
\begin{equation}\label{diffsion-plus}
\begin{aligned}
\left\|\int_t^rF(s)\{U(s,t)-U(s,t_0)\}\xi_1dW(s)\right\|_2^2
=&\int_t^r\left\|F(s)\{U(s,t)-U(s,t_0)\}\xi_1\right\|_2^2 ds\\
\leq&\int_t^r\left\|F(s)\right\|^2_{\mathcal{L}}\left\|\{U(s,t)-U(s,t_0)\}\xi_1\right\|_2^2 ds.
\end{aligned}
\end{equation}
Similarly,
\begin{equation}\label{diffsion}
 \left\|\int_{t_0}^tF(s)U(s,t_0)\xi_1dW(s)\right\|_2^2
 \leq \int_{t_0}^t\|F(s)\|^2_{\mathcal{L}}\left\|U(s,t_0)\xi_1\right\|_2^2 ds.
\end{equation}

From \eqref{the definition of M_(D,F)}, \eqref{the sum of U(r,t)-U(r,t_0)}-\eqref{diffsion}, we obtain that
\begin{equation*}
\begin{aligned}
\|\{U(r,t)-U(r,t_0)\}\xi_1\|_2^2\leq &\mathcal{C}\int_t^rM_{D,F,2}(s)\left\|\{U(s,t)-U(s,t_0)\}\xi_1\right\|_2^2 ds\\
&+\mathcal{C}\int_{t_0}^tM_{D,F,2}(s)\left\|U(s,t_0)\xi_1\right\|_2^2 ds.
\end{aligned}
\end{equation*}
Then, by the Gronwall inequality, we find that
\begin{equation*}
\|\{U(r,t)-U(r,t_0)\}\xi_1\|_2^2\leq \mathcal{C}e^{\int_{t_0}^tM_{D,F,2}(\tau)d\tau}\int_{t}^rM_{D,F,2}(s)ds. 
\end{equation*}
Since $M_{D,F,2}(\cdot)\in L^1([0,T])$, $e^{\int_{t_0}^tM_{D,F,2}(\tau)d\tau}\leq \mathcal{C}e^{t-t_0}$,  
%
we can conclude that
\begin{equation*}
 \lim_{t\to t_0+}\|\{U(r,t)-U(r,t_0)\}\xi_1\|_2^2\leq\mathcal{C}\lim_{t\to t_0+}e^{\int_{t_0}^tM_{D,F,2}(\tau)d\tau}\int_{t}^rM_{D,F,2}(s)ds= 0.
\end{equation*}
Hence, there is $\delta\in(0,T-t_0)$ such that \eqref{u_1=v_1=0} holds for any $t\in[t_0,t_0+\delta]$.
\end{proof}

\section{The Solution to BQSDEs}\label{The main result}
\indent\indent
In this section, we prove that the second-order adjoint  equation \eqref{BSDE-P} admits a unique relaxed transposition solution. 
To make it, we introduce the transposition solution with  $P_T\in \mathcal{L}_2(L^2(\mathscr{C}_T))$, $\mathbb{H}\in L_\mathbb{A}^1([0,T];\mathcal{L}_2(L^2(\mathscr{C})))$.

\begin{defn}\label{The transposition solution}
The pair 
\begin{equation*}
  (P(\cdot),Q(\cdot))\in C_\mathbb{A}([0,T];\mathcal{L}_2(L^2(\mathscr{C})))\times L_\mathbb{A}^2\left([0,T];\mathcal{L}_2(L^2(\mathscr{C}))\right)
\end{equation*}
 is called to be a transposition solution to \eqref{BSDE-P} if for any $t_0\in[0,T]$, $\xi_1,\xi_2\in L^2(\mathscr{C}_{t_0})$, $u_1, u_2, v_1,v_2\in L_\mathbb{A}^2([0,T];L^2(\mathscr{C}))$, it holds that
\begin{equation}\label{defn-the transposition solution}  
\begin{aligned}
 &\langle P_Tx_2(T), x_1(T)\rangle-\int_{t_0}^T\langle \mathbb{H}(t)x_2(t),x_1(t)\rangle dt\\
=&\langle P(t_0)\xi_2, \xi_1\rangle+\int_{t_0}^T\langle P(t)u_2(t), x_1(t)\rangle dt+\int_{t_0}^T\langle P(t)x_2(t), u_1(t)\rangle dt\\
&+\int_{t_0}^T\langle P(t)F(t)x_2(t), v_1(t)\rangle dt+\int_{t_0}^T\langle P(t)v_2(t),F(t)x_1(t)+v_1(t)\rangle dt  \\
& +\int_{t_0}^T\langle Q(t)\Upsilon v_2(t), x_1(t)\rangle dt+\int_{t_0}^T\langle Q(t)\Upsilon x_2(t), v_1(t)\rangle dt,
\end{aligned}
\end{equation}
where $x_1(\cdot)$ and $x_2(\cdot)$ solve \eqref{QSDE-Forward-1} and \eqref{QSDE-Forward-2}, respectively.
\end{defn}

\subsection{The transposition solution to BQSDEs}\label{The transpsotion solution of BQSDEs}
\indent\indent
This subsection is dedicated to the transposition solution to the second-order adjoint equation \eqref{BSDE-P}. 
\begin{thm}\label{Thm-the transposition solution}
Assume that $P_T\in \mathcal{L}_2(L^2(\mathscr{C}_T))$, $\mathbb{H}\in L_\mathbb{A}^\infty([0,T];\mathcal{L}_2(L^2(\mathscr{C})))$. Then the equation \eqref{BSDE-P} admits a unique transposition solution $(P(\cdot),Q(\cdot))$. 
Further,
\begin{equation}\label{estimate of the transposition solution}
  \|(P,Q)\|_{C_\mathbb{A}([0,T];\mathcal{L}_2)\times L_\mathbb{A}^2([0,T];\mathcal{L}_2) }\leq \mathcal{C}\left(\|P_T\|_{\mathcal{L}_2}+\|\mathbb{H}\|_{L^1_\mathbb{A}([0,T];\mathcal{L}_2)}\right).
\end{equation}
\end{thm}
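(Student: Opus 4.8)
The plan is to exploit the fact that, when $P_T$ is Hilbert--Schmidt and $\mathbb{H}$ is $\mathcal{L}_2$-valued, the operator-valued backward equation \eqref{BSDE-P} is genuinely a \emph{linear} backward quantum stochastic differential equation in the \emph{Hilbert} space $\mathcal{L}_2(L^2(\mathscr{C}))$; one first solves it there, and then verifies that its solution is the unique transposition solution. \emph{Step 1 (well-posedness in $\mathcal{L}_2(L^2(\mathscr{C}))$).} For $P,Q\in\mathcal{L}_2(L^2(\mathscr{C}))$ and $D,F\in\mathcal{L}(L^2(\mathscr{C}))$ the compositions $D^*P$, $PD$, $F^*PF$, $F^*Q\Upsilon$, $Q\Upsilon F$ again lie in $\mathcal{L}_2(L^2(\mathscr{C}))$, with $\|D^*P\|_{\mathcal{L}_2}\vee\|PD\|_{\mathcal{L}_2}\le\|D\|_{\mathcal{L}}\|P\|_{\mathcal{L}_2}$, $\|F^*PF\|_{\mathcal{L}_2}\le\|F\|_{\mathcal{L}}^2\|P\|_{\mathcal{L}_2}$, and $\|F^*Q\Upsilon\|_{\mathcal{L}_2}\vee\|Q\Upsilon F\|_{\mathcal{L}_2}\le\|F\|_{\mathcal{L}}\|Q\|_{\mathcal{L}_2}$ (using $\|\Upsilon\|_{\mathcal{L}}=1$). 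Since $D,F\in L^2_\mathbb{A}([0,T];\mathcal{L})$ and $\mathbb{H}\in L^\infty_\mathbb{A}([0,T];\mathcal{L}_2)\subset L^2_\mathbb{A}([0,T];\mathcal{L}_2)$, \eqref{BSDE-P} is a linear BSDE in $\mathcal{L}_2(L^2(\mathscr{C}))$ whose generator is Lipschitz in $(P,Q)$ with a Lipschitz coefficient integrable in $t$ for the $P$-variable and square-integrable in $t$ for the $Q$-variable. I would establish existence, uniqueness and the a priori bound, mutatis mutandis, by the fermion counterpart of the standard Hilbert-space BSDE theory: a Picard iteration on a short terminal interval $[T-\delta,T]$ with $\delta$ chosen so that $\int_{T-\delta}^{T}\big(\|D(t)\|_{\mathcal{L}}^2+\|F(t)\|_{\mathcal{L}}^2\big)\,dt$ is small, each iterate being built from a conditional expectation with respect to $\{\mathscr{C}_t\}$ and the fermion martingale representation (applied coordinatewise in a fixed orthonormal basis of the separable Hilbert space $\mathcal{L}_2(L^2(\mathscr{C}))$), with the It\^o isometry \eqref{isomertry of H-S operator} controlling the $Q$-part; linearity then lets finitely many such steps cover $[0,T]$, and a Gronwall argument along the patching delivers \eqref{estimate of the transposition solution} with $\|\mathbb{H}\|_{L^1_\mathbb{A}}$ on the right (only the $L^1$-norm of $\mathbb{H}$ is needed, which is why the estimate is stated in that form). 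This yields a unique $(P(\cdot),Q(\cdot))\in C_\mathbb{A}([0,T];\mathcal{L}_2)\times L^2_\mathbb{A}([0,T];\mathcal{L}_2)$ solving the integral form of \eqref{BSDE-P}.

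\emph{Step 2 (the transposition identity).} Fix $t_0$, $\xi_1,\xi_2\in L^2(\mathscr{C}_{t_0})$, $u_1,u_2,v_1,v_2\in L^2_\mathbb{A}$, and let $x_1,x_2$ solve \eqref{QSDE-Forward-1}, \eqref{QSDE-Forward-2}. I would apply the fermion It\^o product rule to $t\mapsto\langle P(t)x_2(t),x_1(t)\rangle$ on $[t_0,T]$, substitute the dynamics of $P$, $x_1$, $x_2$, and integrate. The drift coming from the $D^*P+PD+F^*PF$ part of the generator, together with the drift parts of $dx_1,dx_2$ and the quadratic covariation $\langle P\,dx_2,dx_1\rangle$, assembles into $\int_{t_0}^{T}\big(\langle Pu_2,x_1\rangle+\langle Px_2,u_1\rangle+\langle PFx_2,v_1\rangle+\langle Pv_2,Fx_1+v_1\rangle\big)\,dt$; the $-\mathbb{H}$ term gives the $-\int_{t_0}^{T}\langle\mathbb{H}x_2,x_1\rangle\,dt$ on the left-hand side; and the covariations $\langle(dP)x_2,dx_1\rangle$ and $\langle(dP)\,dx_2,x_1\rangle$ between $Q\,dW$ and the diffusion parts $Fx_i+v_i$ of the forward equations — evaluated by commuting the future increment of $W$ past the present factor via the CAR \eqref{CAR-Psi}, which is exactly the step that produces the operator $\Upsilon$ — reproduce the $F^*Q\Upsilon+Q\Upsilon F$ terms of the generator and leave exactly the extra contributions $\int_{t_0}^{T}\big(\langle Q\Upsilon v_2,x_1\rangle+\langle Q\Upsilon x_2,v_1\rangle\big)\,dt$. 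The ``martingale'' part of the resulting identity drops out because the inner product of $L^2(\mathscr{C})$ already carries the central state $m$. What is left is exactly \eqref{defn-the transposition solution}.

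\emph{Step 3 (uniqueness).} If $(P_1,Q_1)$ and $(P_2,Q_2)$ both satisfy \eqref{defn-the transposition solution}, subtracting the two identities (their left-hand sides coincide, being independent of $(P,Q)$) leaves an identity with vanishing left-hand side. Taking $u_i=v_i=0$, so that $x_i=U(\cdot,t_0)\xi_i$ with $U$ from Lemma~\ref{the relationship between solution and coefficients}(1), only $\langle(P_1-P_2)(t_0)\xi_2,\xi_1\rangle$ survives; letting $\xi_1,\xi_2$ range over $L^2(\mathscr{C}_{t_0})$ and $t_0$ over $[0,T]$ forces $P_1\equiv P_2$. With $P_1=P_2$, setting $v_1=0$ and $u_1=0$ reduces the relation to $\int_{t_0}^{T}\langle(Q_1-Q_2)(t)\Upsilon v_2(t),U(t,t_0)\xi_1\rangle\,dt=0$ for all $v_2\in L^2_\mathbb{A}([t_0,T];L^2(\mathscr{C}))$, all $\xi_1\in L^2(\mathscr{C}_{t_0})$ and all $t_0$; localizing $v_2$ near a point $t$ and using that $\Upsilon$ maps $L^2(\mathscr{C}_t)$ onto itself gives $(Q_1-Q_2)(t)^*U(t,t_0)\xi_1=0$ for a.e.\ $t$, and then the continuity estimate \eqref{u_1=v_1=0} together with the continuity of the filtration (so that $\bigcup_{s<t}L^2(\mathscr{C}_s)$ is dense in $L^2(\mathscr{C}_t)$) makes the vectors $U(t,t_0)\xi_1$ total, whence $Q_1\equiv Q_2$.

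The step I expect to be the main obstacle is Step 2: carrying out the fermion It\^o / integration-by-parts computation for the mixed expression $\langle P(t)x_2(t),x_1(t)\rangle$ — with $P$ operator-valued (in $\mathcal{L}_2$) and $x_1,x_2$ vector-valued — with full rigor, and in particular getting the placement and the sign of every factor $\Upsilon$ right in the cross-variation terms. Non-commutativity destroys the symmetry familiar from the classical second-order adjoint computation, the operators $\Upsilon$ being exactly the trace left by absorbing the right-multiplication diffusion $dW\,\widehat{G}_x$ into the coefficient $F$; keeping the operator-valued stochastic integral $\int Q\,dW$ and its pairings with $x_1,x_2$ under control throughout this computation is the delicate point.
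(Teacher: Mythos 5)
Your Step 1 is where the proposal breaks down. You build the solution by Picard iteration whose every step requires representing an $\mathcal{L}_2(L^2(\mathscr{C}))$-valued Clifford martingale as a stochastic integral $\int Q\,dW$, and you propose to get this ``coordinatewise in a fixed orthonormal basis''. No such operator-valued martingale representation theorem is available in the cited literature, and the coordinatewise reduction does not work as stated, for two structural reasons. First, the natural martingale here is $t\mapsto \mathcal{P}_t M_T$, where $\mathcal{P}_t$ is the orthogonal projection onto $\mathcal{L}_2(L^2(\mathscr{C}_t))$, i.e.\ $E_t M_T E_t$ with $E_t$ the projection of $L^2(\mathscr{C})$ onto $L^2(\mathscr{C}_t)$; its column $\bigl(\mathcal{P}_t M_T\bigr)e_j=E_t M_T E_t e_j$ is \emph{not} a Clifford martingale in the Barnett--Streater--Wilde sense, because the inner projection also moves with $t$ (the filtration $\mathcal{L}_2(L^2(\mathscr{C}_t))$ is ``diagonal'' in both tensor legs, not of product type with a fixed second factor). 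Second, the operator-valued integral does not act columnwise: $\bigl(\int Q\,dW\bigr)e_j=\sum_k Q(t_k)\bigl((W(t_{k+1})-W(t_k))e_j\bigr)$, so the increment hits $e_j$ inside the composition; commuting it out costs the grading $\Upsilon$ and is legitimate only for suitably adapted vectors (for a general $e_j$, e.g.\ $e_j$ proportional to the increment itself, the relation $(\Delta W)x=\Upsilon(x)\Delta W$ fails), so $\bigl(\int Q\,dW\bigr)e_j$ is not the Clifford integral of the column process $Q(\cdot)e_j$. This is precisely the obstruction the paper's argument is designed to bypass: instead of any representation theorem, it defines the candidate $(P,Q)$ by duality, applying the Riesz-type identification $(L^p_\mathbb{A}([0,T];\mathcal{L}_2))^*=L^q_\mathbb{A}([0,T];\mathcal{L}_2)$ (Theorem~\ref{Riesz representation}) to the bounded functional $ (z,w,X_0)\mapsto\langle P_T,X(T)\rangle_{\mathcal{L}_2}-\int_{t_0}^T\langle g,X\rangle_{\mathcal{L}_2}dt$ built from a forward $\mathcal{L}_2$-valued equation, then proving time-consistency in $t_0$, identifying $\zeta^{t_0}=P(t_0)$ by a Lebesgue-point argument, and only then treating the full Lipschitz generator; your plan supplies no substitute for this step.

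Two smaller points. In Step 2 you invoke a ``fermion It\^o product rule'' for $\langle P(t)x_2(t),x_1(t)\rangle$ with $P$ operator-valued and driven by the operator-valued integral $\int Q\,dW$; such a mixed formula is not off the shelf, and the paper's rigorous substitute is to form the rank-one process $\mathbf{T}=x_1\otimes x_2$, derive its forward $\mathcal{L}_2$-valued QSDE by the Applebaum--Hudson It\^o formula, plug it into the already established $\mathcal{L}_2$-pairing identity, and evaluate the cross terms $\bigl\langle\int Q\,dW,\int\beta\,dW+\int dW\,\gamma\bigr\rangle_{\mathcal{L}_2}$ by the Hilbert--Schmidt isometry computations; you correctly flag the $\Upsilon$ bookkeeping as the delicate point but do not provide this device. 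Your Step 3 uniqueness argument (kill $P_1-P_2$ with $u_i=v_i=0$, then identify $Q_1-Q_2$ by localizing $v_2$ and using the totality of $U(t,t_0)\xi_1$ via \eqref{u_1=v_1=0}) is sound in spirit and close to what the paper does for the relaxed solution, modulo null-set bookkeeping; but without a correct existence construction the proposal does not prove the theorem.
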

\begin{proof}
Because the proof is too long, we divide the proof into several steps.

\textbf{Step 1.} We shall prove that the following linear BQSDE admits a unique transposition solution 
\begin{equation}\label{BQSDE-P-linear}
\left\{
 \begin{aligned}
&dP=g(t)dt+Q(t)dW(t),\quad \textrm{in}\ [0,T),\\
&P(T)=P_T,
 \end{aligned}
\right.
\end{equation}
where the terminal condition $P_T\in \mathcal{L}_2(L^2(\mathscr{C}_T))$, $g\in L_\mathbb{A}^1([0,T];\mathcal{L}_2(L^2(\mathscr{C})))$. 
To get the desired result, we introduce the following QSDE:
\begin{equation}\label{QSDE-X}
\left\{
 \begin{aligned}
&dX=z(t)dt+w(t)dW(t),\quad \textrm{in}\ (t_0,T],\\
&X(t_0)=X_0,
 \end{aligned}
\right.
\end{equation}
where the initial condition $X_0\in \mathcal{L}_2(L^2(\mathscr{C}_{t_0}))$,  $z\in L^1_\mathbb{A}([0,T];\mathcal{L}_2(L^2(\mathscr{C})))$ and $w\in L^2_\mathbb{A}([0,T];\mathcal{L}_2(L^2(\mathscr{C}))$. By \eqref{isomertry of H-S operator} and Picard iteration, similar to \cite[Theorem 2.1]{B.S.W.2}, we prove that $X(\cdot)\in C_\mathbb{A}([0,T];\mathcal{L}_2(L^2(\mathscr{C})))$ is the unique solution to  \eqref{QSDE-X}, and 
\begin{equation}\label{estimate of X}
\begin{aligned}
&\sup_{t\in[t_0,T]}\|X(t)\|_{\mathcal{L}_2} \leq\mathcal{C}\left(\|X_0\|_{\mathcal{L}_2}+\|z\|_{L^1_\mathbb{A}([0,T];\mathcal{L}_2)}+\|w\|_{L^2_\mathbb{A}([0,T];\mathcal{L}_2)}\right).
\end{aligned}
\end{equation}

For any $t_0\in [0,T]$, we define a linear functional $l$ on $L^1_\mathbb{A}([t_0,T];\mathcal{L}_2(L^2(\mathscr{C})))\times L^2_\mathbb{A}([t_0,T];\mathcal{L}_2(L^2(\mathscr{C})))\times \mathcal{L}_2(L^2(\mathscr{C}))$ as follows:
\begin{equation}\label{the definition of linear functional}
l(z(\cdot),w(\cdot),X_0):=\langle P_T,X(T)\rangle_{\mathcal{L}_2}-\int_{t_0}^{T}\langle g(t),X(t)\rangle_{\mathcal{L}_2}dt.
\end{equation}
By H\"{o}lder inequality and \eqref{estimate of X}, we find that
\begin{equation}\label{estimate of X and P}
\begin{aligned}
|l(z(\cdot),w(\cdot),X_0)|
\leq&\|P_T\|_{\mathcal{L}_2}\|X(T)\|_{\mathcal{L}_2}+\|g\|_{L^1_\mathbb{A}([t_0,T];\mathcal{L}_2)}\sup_{t\in[t_0,T]}\|X(t)\|_{\mathcal{L}_2}\\
\leq& \mathcal{C}(\|P_T\|_{\mathcal{L}_2}+\|g\|_{L^1_\mathbb{A}([t_0,T];\mathcal{L}_2)})\\
& \cdot\|(z(\cdot),w(\cdot),X_0)\|_{L^1_\mathbb{A}([t_0,T];\mathcal{L}_2)\times L^2_\mathbb{A}([t_0,T];\mathcal{L}_2)\times \mathcal{L}_2}, \quad
t_0\in[0,T],
\end{aligned}
\end{equation}
where the positive constant $\mathcal{C}$ is independent of $t_0$. 
Thus, $l$ is a bounded linear functional on $L^1_\mathbb{A}([t_0,T];\mathcal{L}_2(L^2(\mathscr{C})))\times L_\mathbb{A}^2([t_0,T];\mathcal{L}_2(L^2(\mathscr{C})))\times \mathcal{L}_2(L^2(\mathscr{C}))$. By  Theorem \ref{Riesz representation}, we can find $P^{t_0}(\cdot)\in L_\mathbb{A}^\infty([t_0,T];\mathcal{L}_2(L^2(\mathscr{C})))$,  $Q^{t_0}(\cdot)\in L_\mathbb{A}^2([t_0,T];\mathcal{L}_2(L^2(\mathscr{C})))$, and $\zeta^{t_0}\in \mathcal{L}_2(L^2(\mathscr{C}_{t_0}))$ such that
\begin{equation}\label{ito formula-X-P}
\begin{aligned}
& \langle P_T, X(T)\rangle_{\mathcal{L}_2}-\int_{t_0}^T\langle g(t) ,X(t)\rangle_{\mathcal{L}_2} dt\\
=&\int_{t_0}^T\langle P^{t_0}(t),z(t)\rangle_{\mathcal{L}_2}dt +\int_{t_0}^T\langle Q^{t_0}(t), w(t) \rangle_{\mathcal{L}_2} dt+\langle\zeta^{t_0},X_0\rangle_{\mathcal{L}_2}.
 \end{aligned}
\end{equation}
Clearly, $\zeta^T=P_T$. Further, there is a positive constant $\mathcal{C}$ such that, for any $t_0\in[0,T]$,
\begin{equation*}
\begin{aligned}
\|\left(P^{t_0}(\cdot),Q^{t_0}(\cdot), \zeta^{t_0}\right)\|_{L_\mathbb{A}^\infty([t_0,T];\mathcal{L}_2)\times L_\mathbb{A}^2([t_0,T];\mathcal{L}_2)\times \mathcal{L}_2}
\leq\mathcal{C}\left(\|g\|_{L^1_\mathbb{A}([t_0,T];\mathcal{L}_2)}+\|P_T\|_{\mathcal{L}_2} \right).
\end{aligned}
\end{equation*}

Next, we shall show the time consistency of $(P^{t_0}(\cdot),Q^{t_0}(\cdot))$, that is, for any $t_1$ and $t_2$ satisfying $0\leq t_1\leq t_2\leq T$, it holds that
\begin{equation}\label{the time consistency of t1 and t2}
 \left(P^{t_1}(t),Q^{t_1}(t)\right)=\left(P^{t_2}(t),Q^{t_2}(t)\right),\quad t\in[t_2,T].
\end{equation}
Let $X^{t_0}(\cdot)$ be the solution to \eqref{QSDE-X} with the initial time $t_0$. To show \eqref{the time consistency of t1 and t2}, we fix $\rho\in L^1_\mathbb{A}([t_0,T];\mathcal{L}_2(L^2(\mathscr{C})))$ and $\psi\in L^2_\mathbb{A}([t_0,T];\mathcal{L}_2(L^2(\mathscr{C})))$, and choose $t_0=t_2$, $X_0=0$, $z(\cdot)=\rho(\cdot)$ and $w(\cdot)=\psi(\cdot)$ in \eqref{QSDE-X}. From \eqref{ito formula-X-P}, we obtain that
\begin{equation}\label{ito formula-X-P-1}
\begin{aligned}
&\left\langle P_T, X^{t_2}(T)\right\rangle_{\mathcal{L}_2}-\int_{t_2}^T\left\langle g(t), X^{t_2}(t) \right\rangle_{\mathcal{L}_2}dt 
 =\int_{t_2}^T\left\langle P^{t_1}(t), \rho(t)\right\rangle_{\mathcal{L}_2}+\left\langle Q^{t_1}(t),\psi(t) \right\rangle_{\mathcal{L}_2} dt.
 \end{aligned}
\end{equation}
Then, we choose $t_0=t_1$, $X_0=0$, $z(t)=\chi_{[t_2,T]}(t)\rho(t)$ and $w(t)=\chi_{[t_2,T]}(t)\psi(t)$ in \eqref{QSDE-X}. Clearly, 
$X^{t_1}(t)=\left\{\begin{aligned}&X^{t_2}(t),\quad& t\in[t_2,T],\\
&0,\quad& t\in[t_1,t_2].\end{aligned}
\right.$
From \eqref{ito formula-X-P-1}, we have that
\begin{equation}\label{ito formula-X-P-2}
\begin{aligned}
&\left\langle P_T, X^{t_2}(T)\right\rangle_{\mathcal{L}_2}-\int_{t_2}^T\left\langle g(t), X^{t_2}(t)\right\rangle_{\mathcal{L}_2} dt
=\int_{t_2}^T\left\langle P^{t_2}(t), \rho(t) \right\rangle_{\mathcal{L}_2} +\left\langle Q^{t_2}(t), \psi(t)\right\rangle_{\mathcal{L}_2} dt.
 \end{aligned}
\end{equation}
From \eqref{ito formula-X-P-1} and \eqref{ito formula-X-P-2}, we obtain that,
for any $\rho\in L^1_\mathbb{A}([t_0,T];\mathcal{L}_2(L^2(\mathscr{C})))$ and $\psi\in L^2_\mathbb{A}([t_0,T];\mathcal{L}_2(L^2(\mathscr{C})))$,
\begin{equation}\label{ito-X-P}
 \int_{t_2}^T\left\langle P^{t_1}(t)- P^{t_2}(t), \rho(t)\right\rangle_{\mathcal{L}_2} dt+\int_{t_2}^T\left\langle Q^{t_1}(t)-Q^{t_2}(t), \psi(t)\right\rangle_{\mathcal{L}_2} dt=0.
\end{equation}
If $\psi(\cdot)=0$, then $ P^{t_1}(t)= P^{t_2}(t)$  a.e. $t\in[0,T]$. Similarly, if $\rho(\cdot)=0$, then $Q^{t_1}(t)=Q^{t_2}(t)$ a.e. $t\in[0,T]$.
This implies  \eqref{the time consistency of t1 and t2}.

For any $t\in[0,T]$, let $P(t)= P^{0}(t)$, $Q(t)= Q^{0}(t)$. From \eqref{the time consistency of t1 and t2}, we have
\begin{equation*}
\left(P^{t_0}(t),Q^{t_0}(t)\right)=(P(t),Q(t)),\quad t\in[t_0,T].
\end{equation*}
This, together with \eqref{ito formula-X-P}, implies that
\begin{equation}\label{after P and Q}
\begin{aligned}
&\langle P_T, X(T)\rangle_{\mathcal{L}_2}-\left\langle\zeta^{t_0}, X_0\right\rangle_{\mathcal{L}_2}\\
 = &\int_{t_0}^T\langle g(t), X(t) \rangle_{\mathcal{L}_2}dt+\int_{t_0}^T\langle P(t),z(t)\rangle_{\mathcal{L}_2}dt+\int_{t_0}^T \langle Q(t),w(t)\rangle_{\mathcal{L}_2} dt,\\
(z(\cdot),&w(\cdot),X_0)\in L_{\mathbb{A}}^1([t_0,T];\mathcal{L}_2(L^2(\mathscr{C})))\times L^2_{\mathbb{A}}([t_0,T];\mathcal{L}_2(L^2(\mathscr{C})))\times\mathcal{L}_2(L^2(\mathscr{C})).
\end{aligned}
\end{equation}

Finally, we show that $\zeta^{t}=P(t)$ a.e. $t\in[0,T]$. For fix any $\kappa\in \mathcal{L}_2(L^2(\mathscr{C}_{t_1}))$, let $t_0=t_1$, $z(\cdot)=0$, $w(\cdot)=0$, and $X_0=(t_2-t_1)\kappa$ in \eqref{QSDE-X},  we obtain that 
\begin{equation}\label{the final inner product-1}
\left\langle P_T,(t_2-t_1)\kappa\right\rangle_{\mathcal{L}_2}-\left\langle \zeta^{t_1}, (t_2-t_1)\kappa \right\rangle_{\mathcal{L}_2}=\int_{t_1}^{T}\langle g(t), (t_2-t_1)\kappa\rangle_{\mathcal{L}_2} dt.
\end{equation}
Choosing $t_0=t_1$, $z(t)=\chi_{[t_1,t_2]}(t)\kappa$, $w(\cdot)=0$, and $X_0=0$ in \eqref{QSDE-X}. From \eqref{after P and Q}, we find that
\begin{equation}\label{the final inner product-2}
\begin{aligned}
&\langle P_T,(t_2-t_1)\kappa \rangle_{\mathcal{L}_2}\\
=&\int_{t_1}^{t_2}\langle P(t),\kappa\rangle_{\mathcal{L}_2}dt+\int_{t_1}^{t_2}\langle g(t),(t-t_1)\kappa\rangle_{\mathcal{L}_2} dt+\int_{t_2}^{T}\langle g(t),(t_2-t_1)\kappa\rangle_{\mathcal{L}_2} dt.
\end{aligned}
\end{equation}
From \eqref{the final inner product-1} and \eqref{the final inner product-2}, we obtain that
\begin{equation}\label{the final inner of step 1 of transpostion solution}
 \langle\zeta^{t_1},\kappa \rangle_{\mathcal{L}_2}=\frac{1}{t_2-t_1}\int_{t_1}^{t_2}\langle P(t),\kappa\rangle_{\mathcal{L}_2}dt +\frac{1}{t_2-t_1}\int_{t_1}^{t_2}\langle g(t),(t-t_1)\kappa\rangle_{\mathcal{L}_2} dt-\int_{t_1}^{t_2}\langle g(t),\kappa\rangle_{\mathcal{L}_2} dt.
\end{equation}
It is easy to show that
\begin{equation}\label{the final inner of transpostion solution}
\lim_{t_2\to t_1+}\left\{\frac{1}{t_2-t_1}\int_{t_1}^{t_2}\langle g(t), (t-t_1)\kappa\rangle_{\mathcal{L}_2} dt-\int_{t_1}^{t_2}\langle g(t), \kappa\rangle_{\mathcal{L}_2} dt\right\}=0.
\end{equation}
Combined \eqref{the final inner of step 1 of transpostion solution} and \eqref{the final inner of transpostion solution}, we find that, for any $t_1\in[0,T)$, 
\begin{equation*}
  \lim_{t_2\to t_1+}\frac{1}{t_2-t_1}\int_{t_1}^{t_2}\langle P(t), \kappa\rangle_{\mathcal{L}_2} dt=\langle \zeta^{t_1}, \kappa\rangle_{\mathcal{L}_2}, \quad \kappa \in \mathcal{L}_2(L^2(\mathscr{C}_{t_1})).
\end{equation*}
Choose $\kappa:=\zeta^{t_1}-P(t_1)$, for any $t_1\in[0,T)$,
\begin{equation}\label{the final-2-1}
 \lim_{t_2\to t_1+}\frac{1}{t_2-t_1}\int_{t_1}^{t_2}\langle P(t), \zeta^{t_1}-P(t_1)\rangle_{\mathcal{L}_2} dt=\langle\zeta^{t_1}, \zeta^{t_1}-P(t_1)\rangle_{\mathcal{L}_2}.
\end{equation}
By \cite[Lemma 4.14]{L.Z-2020}, there is a monotonic sequence $\{h_n\}_{n=1}^\infty$ of positive numbers with $\lim\limits_{n\to\infty}h_n=0$ such that for any $t_1\in[0,T)$,
\begin{equation}\label{the final-2-2}
\lim_{h_n\to 0}\frac{1}{h_n}\int_{t_1}^{t_1+h_n}\left\langle P(t),\zeta^{t_1}-P(t_1)\right\rangle_{\mathcal{L}_2} dt=\langle P(t_1),\zeta^{t_1}-P(t_1)\rangle_{\mathcal{L}_2}.
\end{equation}
From \eqref{the final-2-1} and \eqref{the final-2-2}, we deduce that
\begin{equation}\label{the fianl}
\langle\zeta^{t_1}, \zeta^{t_1}-P(t_1)\rangle_{\mathcal{L}_2}= \left\langle P(t_1), \zeta^{t_1}-P(t_1)\right\rangle_{\mathcal{L}_2},\quad t_1\in[0,T).
\end{equation}
Then $\|\zeta^{t_1}-P(t_1)\|_{\mathcal{L}_2}^2=0$ for any $t_1\in[0,T)$. It implies that $\zeta^{t}=P(t)$ a.e. $t\in[0,T]$.
Thus, we obtain that 
\begin{equation}\label{inner product of transposition solution}
\begin{aligned}
  & \langle P_T, X(T)\rangle_{\mathcal{L}_2}-\langle P(t_0), X_0\rangle_{\mathcal{L}_2}\\
=&\int_{t_0}^T\langle P(t), z(t)  \rangle_{\mathcal{L}_2}dt +\int_{t_0}^T \langle Q(t), w(t)  \rangle_{\mathcal{L}_2} dt+\int_{t_0}^T\langle g(t), X(t)\rangle_{\mathcal{L}_2} dt.
\end{aligned}
\end{equation}

Let $z(\cdot)=0$ and $w(\cdot)=0$ in \eqref{QSDE-X}. 
 For any $t,t'\in[t_0,T]$, we have 
\begin{equation*}
 \langle P(t)-P(t'), X_0\rangle_{\mathcal{L}_2}=\left\langle \int_{t}^{t'} g(s) ds, X(t) \right\rangle_{\mathcal{L}_2}.
\end{equation*}
Then $\lim\limits_{t'\to t} |\langle P(t)-P(t'),X_0\rangle_{\mathcal{L}_2}|=0,$ that is, 
\begin{equation*}
\textrm{(w)-}\lim\limits_{t'\to t}P(t')=P(t),\ \textrm{ in}\ \mathcal{L}_2(L^2(\mathscr{C})).
\end{equation*}
 Hence, $P(\cdot)\in C_\mathbb{A}([t_0,T];\mathcal{L}_2(L^2(\mathscr{C})))$. Clearly, $(P(\cdot), Q(\cdot))\in C_\mathbb{A}([t_0,T];\mathcal{L}_2(L^2(\mathscr{C})))\times L^2_\mathbb{A}([t_0,T];$\ $\mathcal{L}_2(L^2(\mathscr{C})))$ satisfies that 
\begin{equation}\label{the estimate of P,Q}
\|(P(\cdot), Q(\cdot))\|_{C_\mathbb{A}([t_0,T];\mathcal{L}_2)\times L^2_\mathbb{A}([t_0,T];\mathcal{L}_2)}\leq\mathcal{C}\left(\|g\|_{L^1_\mathbb{A}([t_0,T];\mathcal{L}_2)}+\|P_T\|_{\mathcal{L}_2}\right).
\end{equation}
The uniqueness of the transposition solution to \eqref{BQSDE-P-linear} is obvious.

\textbf{Step 2.} We investigate the transposition solution to \eqref{BSDE-P} with $P_T\in \mathcal{L}_2(L^2(\mathscr{C}_T))$, $\mathbb{H}\in L_\mathbb{A}^\infty([t_0,T];\mathcal{L}_2(L^2(\mathscr{C})))$. 
Consider the transposition solution to the following BQSDE:
\begin{equation}\label{BQSDE-P-3.1}
\left\{
 \begin{aligned}
&dP=f(t,P(t),Q(t))dt+Q(t)dW(t),\quad \textrm{in}\ [0,T),\\
&P(T)=P_T,
 \end{aligned}
\right.
\end{equation}
where
\begin{equation*}
  f(t,P,Q)=-PD-D^*P-F^*Q\Upsilon-Q\Upsilon F-F^*PF+\mathbb{H}.
\end{equation*}
Since $D,F$ are bounded linear operators on $L^2(\mathscr{C})$, $f(\cdot,\cdot,\cdot)$ satisfies the Lipschitz condition, i.e.,
for any $(P,Q)$ and $(\widetilde{P},\widetilde{Q})$, there exists a constant $\mathcal{C}$ such that 
\begin{equation}\label{Lipschitz condition of f}
 \left\|f(t,P,Q)-f\left(t,\widetilde{P},\widetilde{Q}\right)\right\|_{\mathcal{L}_2}\leq \mathcal{C}\left(\left\|P-\widetilde{P}\right\|_{\mathcal{L}_2}+\left\|Q-\widetilde{Q}\right\|_{\mathcal{L}_2}\right),\ t\in[0,T].
\end{equation}
Similar to the classical method \cite[Theorem 4.19]{L.Z-2020}, it is clear that $(P(\cdot),Q(\cdot))\in C_\mathbb{A}([0,T];\mathcal{L}_2(L^2(\mathscr{C})))$ $\times L_\mathbb{A}^2([0,T];\mathcal{L}_2(L^2(\mathscr{C})))$ is the unique transposition solution to \eqref{BQSDE-P-3.1}. Similar to \eqref{the estimate of P,Q}, we can derive \eqref{estimate of the transposition solution} by combining \eqref{Lipschitz condition of f}. 


For $x_1(\cdot),x_2(\cdot)\in C_\mathbb{A}([0,T];L^2(\mathscr{C}))$, define $\textbf{T}(\cdot)$ by $\textbf{T}(\cdot):=x_1(\cdot)\otimes x_2(\cdot)$. For any $y\in L^2(\mathscr{C})$, $\textbf{T}(\cdot)y=\langle x_2(\cdot),y \rangle x_1(\cdot)$.
 Clearly, $\textbf{T}(\cdot)\in \mathcal{L}_2(L^2(\mathscr{C}))$. 
 By fermion It\^{o}'s formula \cite{A.H-2}, we have 
\begin{equation}\label{Ito-x1 and x2}
\begin{aligned}
d\textbf{T} 
=&\left(dx_1\right)\otimes x_2+x_1\otimes \left(dx_2\right)+dx_1\otimes dx_2\\
=&\left\{Dx_1 \otimes x_2 +x_1 \otimes Dx_2+u_1\otimes x_2+x_1\otimes u_2+\left(Fx_1 +v_1\right)\otimes \left(Fx_2 +v_2\right)\right\}dt\\
&+\left\{x_1 \otimes \Upsilon\left(Fx_2+v\right)\right\}dW(t)+dW(t)\left\{\Upsilon\left(Fx_1 +v_1\right)\otimes x_2\right\}.
\end{aligned}
\end{equation}
It is easy to see that
\begin{equation}\label{short of DTF}
\left\{
\begin{aligned}
&Fx_1  \otimes Fx_2 =F \textbf{T}  F^*,\\
&  Dx_1  \otimes x_2  +x_1  \otimes D x_2 =D\textbf{T} +\textbf{T}  D^*,\\
&x_1  \otimes\Upsilon Fx _2+\Upsilon Fx_1 \otimes x_2  =\textbf{T}  F^*\Upsilon+\Upsilon F\textbf{T}.
\end{aligned}
\right.
\end{equation}
Combining \eqref{Ito-x1 and x2} and \eqref{short of DTF}, we can show that $\textbf{T} $ solves the following equation:
\begin{equation}\label{FSDE-T}
\left\{
\begin{aligned}
&d\textbf{T} =\alpha  dt+\beta  dW(t)+dW(t)\gamma ,\quad \textrm{in}\ (t_0,T],\\
&\textbf{T} (t_0)=\xi_1 \otimes \xi_2,
\end{aligned}
\right.
\end{equation}
where
\begin{equation*}
\left\{
\begin{aligned}
\beta =&x_1 \otimes \Upsilon(Fx_2  +v_2) ,\quad\quad \gamma =\Upsilon (Fx_1 +v_1)\otimes x_2,\\
\alpha =&D\textbf{T} +\textbf{T}  D^*+F\textbf{T}  F^*+u_1\otimes x_2 +x_1  \otimes u_2+F x_1  \otimes v_2+v_1\otimes Fx_2  +v_1\otimes v_2.
\end{aligned}
\right.
\end{equation*}

Since the pair $(P(\cdot),Q(\cdot))$ is the transposition solution to \eqref{BSDE-P}, together with \eqref{FSDE-T}, we obtain that
\begin{equation}\label{between T and P in L-2}
\begin{aligned}
&\langle P(T),\textbf{T}(T)\rangle_{\mathcal{L}_2}-\int_{t_0}^T \langle \mathbb{H}(t),\textbf{T}(t)\rangle_{\mathcal{L}_2}dt\\
=&\langle P(t_0),\textbf{T}(t_0)\rangle_{\mathcal{L}_2}+ \int_{t_0}^{T}\left\langle P(t),\alpha(t)\right\rangle_{\mathcal{L}_2}dt -  \int_{t_0}^{T}\left\langle F(t)^*P(t)F(t),\textbf{T}(t)\right\rangle_{\mathcal{L}_2} dt\\
&-\int_{t_0}^T\langle P(t)D(t)+D^*(t)P(t)+F^*(t)Q(t)\Upsilon+Q(t)\Upsilon F(t),\textbf{T}(t)\rangle_{\mathcal{L}_2} dt\\
&+\left\langle\int_{t_0}^TQ(t)dW(t),\int_{t_0}^T\beta(t)dW(t)+\int_{t_0}^TdW(t)\gamma(t)\right\rangle_{\mathcal{L}_2}.
\end{aligned}
\end{equation}
By the definition of $\mathcal{L}_2(L^2(\mathscr{C}))$ and recalling that $\textbf{T}(\cdot)=x_1(\cdot)\otimes x_2(\cdot)$, one has
\begin{equation}\label{L^2(C) denote L_2(L^2(C))}
\langle P(T),\textbf{T}(T) \rangle_{\mathcal{L}_2}=\textrm{tr}(P(T)^*\textbf{T}(T))=\sum_{j=1}^{\infty}\langle P(T)e_j,\textbf{T}(T)e_j\rangle=\langle P(T)x_2(T),x_1(T)\rangle,
\end{equation}
where $\{e_j\}_{j=1}^\infty$ is an orthonormal basis of $L^2(\mathscr{C})$. 
 Similarly, 
we have
\begin{equation}
\left\{
\begin{aligned}
&\langle P(t_0),\textbf{T}(t_0)\rangle_{\mathcal{L}_2}=\langle P(t_0)\xi_2, \xi_1\rangle,\\ 
& \int_{t_0}^T \langle \mathbb{H}(t),\textbf{T}(t)\rangle_{\mathcal{L}_2}dt=\int_{t_0}^T \langle \mathbb{H}(t)x_2(t), x_1(t)\rangle dt,\\
&\int_{t_0}^{T}\left\langle P(t),\alpha(t)\right\rangle_{\mathcal{L}_2}dt=\int_{t_0}^{T}\big\{\left\langle Px_2, Dx_1\right\rangle+\langle PDx_2, x_1\rangle+\langle PFx_2, Fx_1\rangle \\
&\indent\indent\indent+\langle Px_2, u_1\rangle+\langle Pu_2, x_1\rangle+\langle Pv_2, Fx_1\rangle+\langle PFx_2, v_1\rangle+\langle Pv_2, v_1\rangle \big\}dt,\\
&\int_{t_0}^{T}\left\langle F^*PF+PD+D^*P+F^*Q\Upsilon+Q\Upsilon F, \textbf{T}\right\rangle_{\mathcal{L}_2} dt\\
&=\int_{t_0}^{T}\langle PFx_2, Fx_1\rangle+\langle Px_2, Dx_1\rangle+\langle PDx_2, x_1\rangle+\langle Q\Upsilon x_2, F x_1\rangle+\langle Q\Upsilon F x_2, x_1\rangle dt.
\end{aligned}\right.
\end{equation}
Next, we analyze $ \langle \int_{t_0}^T Q(t)dW(t), \int_{t_0}^T\beta(t)dW(t)\rangle_{\mathcal{L}_2}$.
Since $\Upsilon$ is unitary, we have
\begin{equation}\label{inner product of diffusion-1-transposition}
\begin{aligned}
    \left\langle \int_{t_0}^T Q(t)dW(t),\int_{t_0}^T\beta(t)dW(t)\right\rangle_{\mathcal{L}_2}
 = &\sum_{j=1}^{\infty}\left\langle\int_{t_0}^T Q(t)dW(t)e_j, \int_{t_0}^T\beta(t)dW(t)e_j\right\rangle\\
 =&\sum_{j=1}^{\infty} \int_{t_0}^T\langle Q(t)\Upsilon e_j, \beta(t)\Upsilon e_j\rangle dt\\
     =& \int_{t_0}^T\langle Q(t)\Upsilon\left\{F(t)x_2(t)+v_2(t)\right\}, x_1(t)\rangle dt.
\end{aligned}
\end{equation}
Similarly, one has that
\begin{equation}\label{inner product of diffusion-2-transposition}
 \left\langle \int_{t_0}^T Q(t)dW(t), \int_{t_0}^T dW(t)\gamma(t)\right\rangle_{\mathcal{L}_2}
=\int_{t_0}^T\langle Q(t)\Upsilon x_2(t), F(t)x_1(t)+v_1(t)\rangle dt. 
\end{equation}
From \eqref{between T and P in L-2}-\eqref{inner product of diffusion-2-transposition}, we show that the pair $(P(\cdot),Q(\cdot))$ satisfies \eqref{defn-the transposition solution}. Then $(P(\cdot),Q(\cdot))$ is the unique transposition solution to \eqref{BSDE-P} in sense of Definition \ref{The transposition solution}.
\end{proof}

\subsection{The relaxed transposition solution to BQSDEs}\label{The realxed transpsotion solution of BQSDEs}
\indent\indent
This subsection is devoted to the proof of the main result in this paper, i.e.,  Theorem \ref{Thm the relaxed transposition}. 
\begin{proof}[\textbf{The proof of Theorem \ref{Thm the relaxed transposition}}]
First, we prove the existence of relaxed transposition solution to \eqref{BSDE-P}. 
The following proof is divided into serval steps.

\textbf{Step 1.} In this step, we introduce a suitable approximation to \eqref{BSDE-P}.
Recall that $\{e_j\}_{j=1}^\infty$ is an orthonormal basis of $L^2(\mathscr{C})$. Let $\{\Gamma^j\}_{j=1}^\infty$ be the standard projection operator from $L^2(\mathscr{C})$ onto its subspace span$\{e_1,e_2,\cdots,e_n\}$, that is,
$\Gamma^nh=\sum_{j=1}^n\langle e_j, h\rangle e_j,$ for any $h\in L^2(\mathscr{C}).$
 Write
\begin{equation}\label{PT-H of the relaxed transposition solution}
\mathbb{H}^n:=\Gamma^n\mathbb{H},\quad P^n_T:=\Gamma^n P_T.
\end{equation}
Obviously, $\mathbb{H}^n\in L_\mathbb{A}^1([0,T];\mathcal{L}_2(L^2(\mathscr{C})))$, $P_T^n\in \mathcal{L}_2(L^2(\mathscr{C}_T))$, and
\begin{equation}\label{Linear-HS}
\|\mathbb{H}^n\|_{L_\mathbb{A}^1([0,T];\mathcal{L})}+ \|P_T^n\|_{\mathcal{L}}
\leq\mathcal{C}\left(\|\mathbb{H}\|_{L_\mathbb{A}^1([0,T];\mathcal{L})}+ \|P_T\|_{\mathcal{L}}\right).
\end{equation}
Here and henceforth $\mathcal{C}$ denotes a generic constant, independent  of $n$.

Next, we consider the following $\mathcal{L}_2(L^2(\mathscr{C}))$-valued BQSDE:
\begin{equation}\label{BQSDE-HS}
\left\{
\begin{aligned}
&dP^n=-\left\{D(t)^*P^n(t)+P^n(t) D(t)+Q^n(t)\Upsilon F(t)+F(t)^*Q^n(t)\Upsilon\right.\\
&\indent\indent\left.+ F(t)^*P^n(t)F(t)-\mathbb{H}^n(t)\right\}dt+Q^n(t) dW(t),  \quad\rm{in}\ [0,T),\\
&P^n(T)=P_T^n.
\end{aligned}
\right.
\end{equation}
For each $n\in\mathbb{N}$, the equation \eqref{BQSDE-HS} can be regarded as an approximation of \eqref{BSDE-P}. In the rest of the proof, we shall construct the desired solution to \eqref{BSDE-P} by means of the solution to \eqref{BQSDE-HS}.

By Theorem \ref{Thm-the transposition solution}, the equation \eqref{BQSDE-HS} has a unique transposition solution
\begin{equation}\label{the first estimate of solution to BQSDE-HS}
(P^n(\cdot),Q^n(\cdot))\in C_{\mathbb{A}}([t_0,T];\mathcal{L}_2(L^2(\mathscr{C})))\times L_{\mathbb{A}}^2([t_0,T];\mathcal{L}_2(L^2(\mathscr{C}))),
\end{equation}
in the sense of Definition \ref{The transposition solution}. 
Therefore, for any $t_0\in[0,T]$, $\xi_1,\xi_2\in L^2(\mathscr{C}_{t_0})$, $u_1, u_2, v_1, v_2 \in L_{\mathbb{A}}^2([t_0,T];L^2(\mathscr{C}))$,  it holds that
\begin{equation}\label{inner product of Step 1 of The relaxed transposition}
\begin{aligned}
&\left\langle P_T^nx_2(T), x_1(T) \right\rangle-\int_{t_0}^T\langle\mathbb{H}^n(t)x_2(t), x_1(t)\rangle dt\\
=&\langle P^n(t_0)\xi_2, \xi_1\rangle+\int_{t_0}^{T}\langle P^n(t)x_2(t), u_1(t) \rangle dt+\int_{t_0}^{T}\langle P^n(t)u_2(t), x_1(t)\rangle dt\\
& +\int_{t_0}^{T}\langle P^n(t) F(t) x_2(t), v_1(t)\rangle dt +\int_{t_0}^{T}\langle P^n(t)v_2(t), F(t) x_1(t)+ v_1(t)\rangle dt\\
&+\int_{t_0}^{T}\langle Q^n(t)\Upsilon v_2(t),x_1(t)\rangle dt +\int_{t_0}^{T}\left\langle Q^n(t)\Upsilon x_2(t), v_1(t)\right\rangle dt,
\end{aligned}
\end{equation}
where  
$x_1(\cdot)$ and $x_2(\cdot)$ are the solution to \eqref{QSDE-Forward-1} and \eqref{QSDE-Forward-2}, respectively.
And the variational equality \eqref{inner product of Step 1 of The relaxed transposition} is viewed as an approximation of \eqref{Def of relaxed transposition solution}.

\textbf{Step 2.} In this step, we take $n\to\infty$ in \eqref{inner product of Step 1 of The relaxed transposition}. For this purpose, we need to establish some a priori estimates for $P^n(\cdot)$ and $Q^n(\cdot)$.

Let $u_1=v_1=0$ in \eqref{QSDE-Forward-1} and $u_2=v_2=0$ in \eqref{QSDE-Forward-2}. 
From \eqref{inner product of Step 1 of The relaxed transposition}, we obtain that, for any $t_0\in [0,T]$ and $\xi_1,\xi_2 \in L^2(\mathscr{C}_{t_0})$,
\begin{equation}\label{the first inner product of u_1=v_1=u_2=v_2=0}
\begin{aligned}
&\langle P_T^nU(T,t_0)\xi_2, U(T,t_0)\xi_1\rangle-\int_{t_0}^T\langle\mathbb{H}^n(t)U(t,t_0)\xi_2, U(t,t_0)\xi_1 \rangle dt
=\langle P^n(t_0)\xi_2, \xi_1 \rangle.
\end{aligned}
\end{equation}
Hence, for any $\xi_1,\xi_2\in L^2(\mathscr{C}_{t_0})$,
\begin{align*}
&\left\langle U^*(T,t_0)P_T^nU(T,t_0)\xi_2, \xi_1 \right\rangle-\int_{t_0}^T\langle U^*(t,t_0)\mathbb{H}^n(t)U(t,t_0)\xi_2, \xi_1\rangle dt
=\langle P^n(t_0)\xi_2, \xi_1 \rangle.
\end{align*}
Therefore, we have
\begin{equation}\label{P-U-H}
\begin{aligned}
&m\left(U^*(T,t_0)P_T^nU(T,t_0)\xi_2-\int_{t_0}^T U^*(t,t_0)\mathbb{H}^n(t)U(t,t_0)\xi_2 dt\bigg|L^2(\mathscr{C}_{t_0})\right)\\
&\indent\indent=P^n(t_0)\xi_2,\quad  t_0\in [0,T],\ \xi_2\in L^2(\mathscr{C}_{t_0}).
\end{aligned}
\end{equation}
From \eqref{Linear-HS} and \eqref{the first inner product of u_1=v_1=u_2=v_2=0}, we conclude that
\begin{equation}\label{the first estimate of Pn-xi1-xi2}
\begin{aligned}
|\langle P^n(t_0)\xi_2, \xi_1\rangle|\leq& \mathcal{C}\left(\|P_T^n\|_{\mathcal{L}}+\|\mathbb{H}^n\|_{L^1_\mathbb{A}([t_0,T];\mathcal{L})}\right)\|\xi_1\|_{2}\|\xi_2\|_{2}\\
\leq &\mathcal{C}\left(\|P_T\|_{\mathcal{L}}+\|\mathbb{H}\|_{L^1_\mathbb{A}([0,T];\mathcal{L})}\right)\|\xi_1\|_{2}\|\xi_2\|_{2},
\end{aligned}
\end{equation}
where $\mathcal{C}$ does not depend on $n$ and $t_0$.
For the above $P^n(t_0)$, we can find a $\xi_{2,n}\in L^2(\mathscr{C}_{t_0})$ with $\|\xi_{2,n}\|_{2}=1$ such that
\begin{equation}\label{Pn-xi2}
\|P^n(t_0)\xi_{2,n}\|_{2}\geq \frac{1}{2}\|P^n(t_0)\|_{\mathcal{L}}.
\end{equation}
Moreover, we can find a $\xi_{1,n}\in L^2(\mathscr{C}_{t_0})$ with  $\|\xi_{1,n}\|_{2}=1$ such that
\begin{equation}\label{Pn-xi1-xi2}
|\langle P^n(t_0)\xi_{2,n}, \xi_{1,n}\rangle|\geq \frac{1}{2}\|P^n(t_0)\xi_{1,n}\|_{2}.
\end{equation}
It follows from that \eqref{the first estimate of Pn-xi1-xi2}-\eqref{Pn-xi1-xi2}, we deduce that, for all $n\in \mathbb{N}$,
\begin{equation}\label{the estimate about Pn}
\|P^n\|_{L_\mathbb{A}^\infty([0,T];\mathcal{L})}\leq \mathcal{C}\left(\|P_T\|_{\mathcal{L}}+\|\mathbb{H}\|_{L^1_\mathbb{A}([0,T];\mathcal{L})}\right).
\end{equation}
By \cite[Theorem 3.17]{Rudin},
we can find  $P\in \mathcal{L}(L^2_\mathbb{A}([0,T];L^2(\mathscr{C})))$ such that
\begin{equation}\label{the estimate about P}
\|P\|_{\mathcal{L}(L^2_\mathbb{A}([0,T];L^2(\mathscr{C})))}\leq \mathcal{C}\left(\|P_T\|_{\mathcal{L}}+\|\mathbb{H}\|_{L^1_\mathbb{A}([0,T];\mathcal{L})}\right),
\end{equation}
 and a subsequence $\big\{P^{n_k^{(1)}}\big\}_{k=1}^\infty\subset\big\{P^n\big\}_{n=1}^\infty$ so that, for any  $u_2\in L^2_\mathbb{A}([0,T];L^2(\mathscr{C}))$,
\begin{equation}\label{weak convergence of Pn to P}
\textrm{(w)-}\lim_{k\to\infty}P^{n^{(1)}_k}u_2=Pu_2, \ \textrm{in}\ L^2_\mathbb{A}([0,T];L^2(\mathscr{C})),
\end{equation}
since $\left(\mathcal{L}_1(L^2_\mathbb{A}([0,T];L^2(\mathscr{C})))\right)^*\cong\mathcal{L}(L^2_\mathbb{A}([0,T];L^2(\mathscr{C})))$.
Similarly, by \cite[Theorem 3.17]{Rudin} again and \eqref{the estimate about Pn}, for each fixed $t_0\in[0,T)$, there exists  an $R^{(t_0)}\in \mathcal{L}(L^2(\mathscr{C}_{t_0}))$ and a subsequence $\big\{P^{n^{(2)}_k}\big\}_{k=1}^\infty\subset\big\{P^{n_k^{(1)}}\big\}_{n=1}^\infty$  such that, for all $\xi\in L^2(\mathscr{C}_{t_0})$,
\begin{equation}\label{weak convergence of Pn to R}
\textrm{(w)-}\lim_{k\to\infty}P^{n^{(2)}_k}(t_0)\xi=R^{(t_0)}\xi, \quad  \textrm{in}\ L^2(\mathscr{C}_{t_0}).
\end{equation}

Next, let $\xi_1=0$, $u_1=0$ and $u_2=v_2=0$ in \eqref{QSDE-Forward-1} and \eqref{QSDE-Forward-2}, respectively. 
From \eqref{inner product of Step 1 of The relaxed transposition} and Lemma \ref{the relationship between solution and coefficients}, we have
\begin{align*}
&\int_{t_0}^{T}\left\langle Q^n(t)\Upsilon U(t,t_0)\xi_2, v_1(t) \right\rangle dt\\
=&\left\langle P_T^nx_2(T), x_1(T)\right\rangle-\int_{t_0}^T\left\langle \mathbb{H}^n(t)x_2(t), x_1(t)\right\rangle dt-\int_{t_0}^{T}\langle P^n(t) F(t) x_2(t), v_1(t) \rangle dt.
\end{align*}
This, together with  \eqref{Linear-HS}, implies that, for any  $v_1\in L^2_\mathbb{A}([0,T];L^2(\mathscr{C})), \xi_2\in L^2(\mathscr{C}_{t_0})$,
\begin{equation}\label{the estimate of Q-1}
\begin{aligned}
 &\left|\int_{t_0}^{T}\left\langle Q^n(t)\Upsilon U(t,t_0)\xi_2, v_1(t)\right\rangle dt\right|\\
\leq& \mathcal{C}\left(\|P_T^n\|_{\mathcal{L}}+\|\mathbb{H}^n\|_{L_\mathbb{A}^1([0,T];\mathcal{L})}\right)\|v_1\|_{L_\mathbb{A}^2([t_0,T];L^2(\mathscr{C}))}\|\xi_2\|_{2}\\
 \leq &\mathcal{C}\left(\|P_T\|_{\mathcal{L}}+\|\mathbb{H}\|_{L_\mathbb{A}^1([0,T];\mathcal{L})}\right)\|v_1\|_{L_\mathbb{A}^2([t_0,T];L^2(\mathscr{C}))}\|\xi_2\|_{2}.
\end{aligned}
\end{equation}
We define two operators $Q_1^{n,t_0}$ and $\widehat{Q}_1^{n,t_0}$ from $L^2(\mathscr{C}_{t_0})$ to $L^2_\mathbb{A}([t_0,T]; L^2(\mathscr{C}))$ as follows:
\begin{equation*}
 \left\{
 \begin{aligned}
&Q_1^{n,t_0}\xi:=Q^n(\cdot)\Upsilon U(\cdot,t_0)\xi,  \ & \xi\in L^2(\mathscr{C}_{t_0}),\\
&\widehat{Q}_1^{n,t_0}\xi:=\Upsilon Q^n(\cdot)^*U(\cdot,t_0)\xi, \ & \xi\in L^2(\mathscr{C}_{t_0}),
\end{aligned}
 \right.
\end{equation*}
where $U(\cdot,t_0)$ is introduced in Lemma \ref{the relationship between solution and coefficients}.
It is easy to check that $Q_1^{n,t_0},\widehat{Q}_1^{n,t_0}\in \mathcal{L}(L^2(\mathscr{C}_{t_0}); L^2_\mathbb{A}([t_0,T];$ $L^2(\mathscr{C})))$. From \eqref{the estimate of Q-1}, we have
\begin{equation}\label{the estimate of Qn1}
\left\|Q_1^{n,t_0}\right\|_{\mathcal{L}(L^2(\mathscr{C}_{t_0}); L^2_\mathbb{A}([t_0,T];L^2(\mathscr{C})))}\leq \mathcal{C}\left(\|P_T\|_{\mathcal{L}}+\|\mathbb{H}\|_{L_\mathbb{A}^1([0,T];\mathcal{L})} \right).
\end{equation}
Similarly, let $u_1=v_1=0$, $\xi_2=0$ and $u_2=0$,  we obtain that
\begin{equation*}
\begin{aligned}
  &\int_{t_0}^{T}\langle Q^n(t)\Upsilon v_2(t), U(t,t_0)\xi_1\rangle dt\\
  =&\left\langle  P_T^nx_2(T), x_1(T)\right\rangle-\int_{t_0}^T\left\langle\mathbb{H}^n(t)x_2(t),  x_1(t)\right\rangle dt-\int_{t_0}^{T}\langle P^n(t)v_2(t), F(t) x_1(t)\rangle dt.
  \end{aligned}
\end{equation*}
Similar to \eqref{the estimate of Qn1}, it holds that
\begin{equation}\label{the estimate of Qn2}
\left\|\widehat{Q}_1^{n,t_0}\right\|_{\mathcal{L}(L^2(\mathscr{C}_{t_0}); L^2_\mathbb{A}([t_0,T];L^2(\mathscr{C})))}\leq \mathcal{C}\left(\|P_T\|_{\mathcal{L}}+\|\mathbb{H}\|_{L_\mathbb{A}^1([0,T];\mathcal{L})} \right).
\end{equation}
By \cite[Theorem 3.17]{Rudin} again, for each $t_0$, there exist two bounded linear operators $Q_1^{t_0}$ and $\widehat{Q}^{t_0}_1$, from $L^2(\mathscr{C}_{t_0})$ to $ L^2_\mathbb{A}([t_0,T];L^2(\mathscr{C}))$, and  sequences $\big\{Q_1^{n_k^{(3)},t_0}\big\}_{k=1}^\infty\subset\big\{Q_1^{n_k^{(2)},t_0}\big\}_{k=1}^\infty$ and $\big\{\widehat{Q}_1^{n_k^{(3)},t_0}\big\}_{k=1}^\infty$ $\subset\big\{\widehat{Q}_1^{n_k^{(2)},t_0}\big\}_{k=1}^\infty$ such that, for any $\xi\in L^2(\mathscr{C}_{t_0})$
\begin{equation}\label{weak convergence Q1}
\left\{
\begin{aligned}
&\textrm{(w)-}\lim_{k\to\infty}Q_1^{n_k^{(3)},t_0}\xi=Q_1^{t_0}\xi, \quad {\rm in}\ L^2_\mathbb{A}([t_0,T];L^2(\mathscr{C})),\\
&\textrm{(w)-}\lim_{k\to\infty} \widehat{Q}_1^{n_k^{(3)},t_0}\xi=\widehat{Q}_1^{t_0}\xi, \quad {\rm in}\ L^2_\mathbb{A}([t_0,T];L^2(\mathscr{C})).
\end{aligned}
 \right.
\end{equation}
From \eqref{the estimate of Qn1}-\eqref{weak convergence Q1}, we have
\begin{equation}\label{the estimate of Q1t0}
\begin{aligned}
&\left\|Q_1^{t_0}\right\|_{\mathcal{L}(L^2(\mathscr{C}_{t_0}); L^2_\mathbb{A}([t_0,T];L^2(\mathscr{C})))}+\left\|\widehat{Q}_1^{t_0}\right\|_{\mathcal{L}(L^2(\mathscr{C}_{t_0}); L^2_\mathbb{A}([t_0,T];L^2(\mathscr{C})))}\\
&\indent\indent\indent \leq \mathcal{C}\left(\|P_T\|_{\mathcal{L}}+\|\mathbb{H}\|_{L_\mathbb{A}^1([0,T];\mathcal{L})} \right).
\end{aligned}
\end{equation}

On the other hand, let $\xi_1=0$, $u_1=0$ in \eqref{QSDE-Forward-1}, and $\xi_2=0$, $v_2=0$ in \eqref{QSDE-Forward-2}. 
From \eqref{inner product of Step 1 of The relaxed transposition}, we obtain that
\begin{equation}\label{inner product of xi1=xi2=u1=v2=0}
\begin{aligned}
&\langle P_T^nx_2(T), x_1(T)\rangle-\int_{t_0}^T\langle \mathbb{H}^n(t)x_2(t), x_1(t)\rangle dt\\
=&\int_{t_0}^{T}\left\langle P^n(t)u_2(t), x_1(t) \right\rangle dt +\int_{t_0}^{T}\left\langle P^n(t)F(t) x_2(t), v_1(t)\right\rangle   dt+\int_{t_0}^{T}\left\langle Q^n(t)\Upsilon x_2(t),v_1(t)\right\rangle dt.
\end{aligned}
\end{equation}
Then we define an operator $Q_2^{n,t_0}:L_\mathbb{A}^2([t_0,T];L^2(\mathscr{C}))\to L_\mathbb{A}^2([t_0,T];L^2(\mathscr{C}))$ by
\begin{equation*}
 (Q_2^{n,t_0}u)(\cdot):= Q^n(\cdot)\Upsilon V(\cdot,t_0)u,\quad   u\in L_\mathbb{A}^2([t_0,T];L^2(\mathscr{C})),
\end{equation*}
where $V(\cdot, t_0)$ is introduced in Lemma \ref{the relationship between solution and coefficients}.
This, together with \eqref{inner product of xi1=xi2=u1=v2=0}, implies that
\begin{equation}\label{inner product about Q2nt0}
\begin{aligned}
&\int_{t_0}^{T}\left\langle (Q_2^{n,t_0} u_2)(t), v_1(t)\right\rangle dt\\
=&\left\langle P_T^nx_2(T), x_1(T)\right\rangle-\int_{t_0}^{T}\langle P^n(t)F(t) x_2(t), v_1(t)\rangle dt\\
& -\int_{t_0}^T\left\langle\mathbb{H}^n(t)x_2(t),  x_1(t)\right\rangle dt-\int_{t_0}^{T}\langle P^n(t) u_2(t), x_1(t)\rangle dt\\
\leq& \mathcal{C} \left(\| P_T\|_{\mathcal{L}}+\|\mathbb{H}\|_{L_\mathbb{A}^1([0,T];\mathcal{L})}\right)
\|v_1\|_{L_\mathbb{A}^2([t_0,T];L^2(\mathscr{C}_T))}\|u_2\|_{L_\mathbb{A}^2([t_0,T];L^2(\mathscr{C}))},\\
&\indent\indent\indent\indent v_1, u_2\in L_\mathbb{A}^2([t_0,T];L^2(\mathscr{C})).
\end{aligned}
\end{equation}
From \eqref{inner product about Q2nt0}, we deduce that
\begin{equation}\label{the estimate of Q2nt0}
\left\|Q_2^{n,t_0}\right\|_{\mathcal{L}(L_\mathbb{A}^2([t_0,T];L^2(\mathscr{C})))}\leq \mathcal{C} \left(\|P_T\|_{\mathcal{L}}+\|\mathbb{H}\|_{L_\mathbb{A}^1([0,T];\mathcal{L})}\right).
\end{equation}
Besides, let $\xi_1=0$, $v_1=0$ in \eqref{QSDE-Forward-1}, and $\xi_2=0$, $u_2=0$ in \eqref{QSDE-Forward-2}, it holds that
\begin{align*}
&\left\langle P_T^nx_2(T), x_1(T) \right\rangle-\int_{t_0}^T\left\langle\mathbb{H}^n(t)x_2(t), x_1(t)\right\rangle dt\\
 =&\int_{t_0}^{T}\left\langle P^n(t)x_2(t), u_1(t) \right\rangle dt +\int_{t_0}^T\left\langle P^n(t)v_2(t), F(t) x_1(t)\right\rangle dt+\int_{t_0}^T\left\langle Q^n(t)\Upsilon v_2(t), x_1(t)\right\rangle dt.
\end{align*}
And, we define a linear operator $\widehat{Q}_2^{n,t_0}:L_\mathbb{A}^2([t_0,T];L^2(\mathscr{C}))\to L_\mathbb{A}^2([t_0,T];L^2(\mathscr{C}))$ by
\begin{equation*}
\left(\widehat{Q}_2^{n,t_0}u\right)(\cdot):=\Upsilon Q^n(\cdot)^*V(\cdot,t_0)u, \quad  u\in L_\mathbb{A}^2([t_0,T];L^2(\mathscr{C})).
\end{equation*}
By a similar argument to \eqref{the estimate of Q2nt0}, we find that
\begin{equation}\label{the estimate of widehat-Q2nt0}
\left\|\widehat{Q}_2^{n,t_0}\right\|_{\mathcal{L}(L_\mathbb{A}^2([t_0,T];L^2(\mathscr{C})))}\leq \mathcal{C} \left(\|P_T\|_{\mathcal{L}}+\|\mathbb{H}\|_{L_\mathbb{A}^1([0,T];\mathcal{L})}\right).
\end{equation}
By \cite[Theorem 3.17]{Rudin} again,
  for each $t_0$, there exist two bounded linear operator $Q_2^{t_0}$ and $\widehat{Q}_2^{t_0}$ on $L_\mathbb{A}^2([0,T];L^2(\mathscr{C}))$  and subsequences $\big\{Q_2^{n_k^{(4)},t_0}\big\}_{k=1}^\infty\subset \big\{Q_2^{n_k^{(3)},t_0}\big\}_{k=1}^\infty$ and $\big\{\widehat{Q}_2^{n_k^{(4)},t_0}\big\}_{k=1}^\infty\subset \big\{\widehat{Q}_2^{n_k^{(3)},t_0}\big\}_{k=1}^\infty$ such that, for all $u\in L_\mathbb{A}^2([0,T];L^2(\mathscr{C}))$
\begin{equation}\label{weak convergence of Q2}
\left\{
\begin{aligned}
&\textrm{(w)-}\lim_{k\to\infty}Q_2^{n_k^{(4)},t_0}u=Q_2^{t_0}u, \quad  {\textrm{in}}\ L^2_\mathbb{A}([t_0,T];L^2(\mathscr{C})),\\
&\textrm{(w)-}\lim_{k\to\infty} \widehat{Q}_2^{n_k^{(4)},t_0}u=\widehat{Q}_2^{t_0}u,\quad {\textrm{in}}\ L^2_\mathbb{A}([t_0,T];L^2(\mathscr{C})).
\end{aligned}
 \right.
\end{equation}
From \eqref{the estimate of Q2nt0} and \eqref{the estimate of widehat-Q2nt0}, we obtain that
\begin{equation}\label{the estimate of Q2-t0}
\left\|Q_2^{t_0}\right\|_{\mathcal{L}(L^2_\mathbb{A}([t_0,T];L^2(\mathscr{C})))}+\left\|\widehat{Q}_2^{t_0}\right\|_{\mathcal{L}(L^2_\mathbb{A}([t_0,T];L^2(\mathscr{C})))}
\leq\mathcal{C}\left(\|P_T\|_{\mathcal{L}}+\|\mathbb{H}\|_{L_\mathbb{A}^1([0,T];\mathcal{L})}\right).
\end{equation}

Next, we choose $\xi_1=0$ and $u_1=0$ in \eqref{QSDE-Forward-1}, and $\xi_2=0$ and $u_2=0$ in \eqref{QSDE-Forward-2}. 
From \eqref{inner product of Step 1 of The relaxed transposition}, we obtain that
\begin{equation}\label{inner product of xi1=xi2=u1=u2=0}
\begin{aligned}
&\langle P_T^nx_2(T),  x_1(T) \rangle-\int_{t_0}^T\left\langle\mathbb{H}^n(t)x_2(t), x_1(t)\right\rangle dt\\
=&\int_{t_0}^{T}\langle Q^n(t)\Upsilon v_2(t),  x_1(t)\rangle dt+\int_{t_0}^{T}\left\langle Q^n(t)\Upsilon x_2(t), v_1(t)\right\rangle dt\\
& +\int_{t_0}^{T}\left\langle P^n(t) F(t) x_2(t), v_1(t)\right\rangle dt +\int_{t_0}^{T}\left\langle P^n(t)v_2(t), F(t) x_1(t)+ v_1(t)\right\rangle dt.
\end{aligned}
\end{equation}
We define a semi-bilinear functional $\mathcal{B}_{n,t_0}(\cdot,\cdot)$ on $(L^2_\mathbb{A}([t_0,T];L^2(\mathscr{C})))^2$ as follows: 
\begin{equation}\label{definition of bilinear functional B}
\begin{gathered}
\mathcal{B}_{n,t_0}(v_1,v_2):=\int_{t_0}^{T}\left\langle Q^n(t)\Upsilon x_2(t), v_1(t)\right\rangle dt +\int_{t_0}^{T}\left\langle Q^n(t)\Upsilon v_2(t), x_1(t) \right\rangle dt,\\
  v_1,v_2\in L^2_\mathbb{A}([t_0,T];L^2(\mathscr{C})).
\end{gathered}
\end{equation}
It is easy to check that $\mathcal{B}_{n,t_0}(\cdot,\cdot)$ is a bounded semi-bilinear functional. From \eqref{inner product of xi1=xi2=u1=u2=0}, we have
\begin{equation}\label{inner product about Bnk}
\begin{aligned}
 &\mathcal{B}_{n_k^{(1)},t_0}(v_1,v_2)\\
 =&\left\langle P_T^{n_k^{(1)}}x_2(T), x_1(T) \right\rangle-\int_{t_0}^{T}\left\langle P^{n_k^{(1)}}(t) F(t) x_2(t), v_1(t)\right\rangle dt\\
 &- \int_{t_0}^T\left\langle\mathbb{H}^{n_k^{(1)}}(t)x_2(t), x_1(t)\right\rangle dt-\int_{t_0}^{T}\left\langle P^{n_k^{(1)}}(t)v_2(t), F(t) x_1(t)+ v_1(t)\right\rangle dt.
 \end{aligned}
\end{equation}
It is easy to verify that
\begin{equation*}
  \left\{
\begin{aligned}
 & \lim_{k\to\infty}\left\langle P_T^{n_k^{(1)}}x_2(T), x_1(T) \right\rangle=\langle P_Tx_2(T), x_1(T)\rangle,\\
 & \lim_{k\to\infty} \int_{t_0}^T\left\langle \mathbb{H}^{n_k^{(1)}}(t)x_2(t), x_1(t)\right\rangle dt=\int_{t_0}^T\left\langle\mathbb{H}(t)x_2(t), x_1(t)\right\rangle dt,\\
 &\lim_{k\to\infty}\int_{t_0}^{T}\left\langle P^{n_k^{(1)}}(t) F(t) x_2(t), v_1(t)\right\rangle dt=\int_{t_0}^{T}\langle P(t) F(t) x_2(t), v_1(t)\rangle dt,\\
& \lim_{k\to\infty}\int_{t_0}^{T}\left\langle P^{n_k^{(1)}}(t)v_2(t), F(t) x_1(t)+ v_1(t)\right\rangle dt=\int_{t_0}^{T}\langle P(t)v_2(t), F(t) x_1(t)+ v_1(t)\rangle dt,
  \end{aligned}
  \right.
\end{equation*}
where $x_1$ (\textit{resp.} $x_2$) is the solution to \eqref{QSDE-Forward-1} (\textit{resp.} \eqref{QSDE-Forward-2}) under the condition that $\xi_1=0$ and $u_1=0$ (\textit{resp.} $\xi_2=0$ and $u_2=0$). 
Combined with \eqref{inner product about Bnk}, we deduce that
\begin{equation}\label{inner product about B}
 \begin{aligned}
\mathcal{B}_{t_0}(v_1,v_2)=&\lim_{k\to\infty}\mathcal{B}_{n_k^{(1)},t_0}(v_1,v_2)\\
=&\left\langle P_Tx_2(T), x_1(T)\right\rangle-\int_{t_0}^{T}\left\langle P(t) F(t) x_2(t),  v_1(t)\right\rangle dt\\
 & - \int_{t_0}^T\left\langle \mathbb{H}(t)x_2(t), x_1(t)\right\rangle dt-\int_{t_0}^{T}\langle P(t)v_2(t), F(t) x_1(t)+ v_1(t)\rangle dt.
 \end{aligned}
\end{equation}
It follows from Lemma \ref{the relationship between solution and coefficients}, \eqref{the estimate about P},  \eqref{definition of bilinear functional B} and \eqref{inner product about B} that
\begin{equation}\label{the estimate of the Bt0}
|\mathcal{B}_{t_0}(v_1,v_2)|\leq \mathcal{C}\left(\|P_T\|_{\mathcal{L}}+\|\mathbb{H}\|_{L^1_\mathbb{A}([0,T];\mathcal{L})}\right)\\
\|v_1\|_{L^2_\mathbb{A}([t_0,T];L^2(\mathscr{C}))}\|v_2\|_{L^2_\mathbb{A}([t_0,T];L^2(\mathscr{C}))}.
\end{equation}
Hence, $\mathcal{B}_{t_0}(\cdot,\cdot)$ is a bounded semi-bilinear functional on $(L^2_\mathbb{A}([t_0,T];L^2(\mathscr{C})))^2$. 
Now, for any fixed $v_1\in L^2_\mathbb{A}([t_0,T];L^2(\mathscr{C}))$, it is easy to check that $\mathcal{B}_{t_0}( v_1,\cdot)$ is a bounded  conjugate linear functional on $L^2_\mathbb{A}([t_0,T];L^2(\mathscr{C}))$. Therefore 
there is a unique $\widetilde{v}_2\in L^2_\mathbb{A}([t_0,T];L^2(\mathscr{C}))$ such that
\begin{equation*}
 \mathcal{B}_{t_0}(v_1,v_2)=\langle \widetilde{v}_2,v_1\rangle_{L^2_\mathbb{A}([t_0,T];L^2(\mathscr{C})),L^2_\mathbb{A}([t_0,T];L^2(\mathscr{C}))},\quad  v_2\in L^2_\mathbb{A}([t_0,T];L^2(\mathscr{C})).
\end{equation*}
Next, we define an operator $\widehat{Q}_3^{t_0}:L^2_\mathbb{A}([t_0,T];L^2(\mathscr{C}))\to L^2_\mathbb{A}([t_0,T];L^2(\mathscr{C}))$ as follows: 
\begin{equation*} 
\widehat{Q}_3^{t_0} v_2=\widetilde{v}_2.
\end{equation*}
 From the uniqueness of $\widetilde{v}_2$, it is clear that $\widehat{Q}_3^{t_0}$ is well-defined. Further, combined with \eqref{the estimate of the Bt0}, we can infer that
\begin{equation}\label{the estimate of widehatQ3t0}
\begin{aligned}
\left\|\widehat{Q}_3^{t_0} v_2\right\|_{L^2_\mathbb{A}([t_0,T];L^2(\mathscr{C}))}=&\left\|\widetilde{v}_2\right\|_{L^2_\mathbb{A}([t_0,T];L^2(\mathscr{C}))}\\
\leq& \mathcal{C}\left(\|P_T\|_{\mathcal{L}}+\|\mathbb{H}\|_{L^1_\mathbb{A}([0,T];\mathcal{L})}\right)\|v_2\|_{L^2_\mathbb{A}([t_0,T];L^2(\mathscr{C}))}.
\end{aligned}
\end{equation}
Thus $\widehat{Q}_3^{t_0}$ is a bounded operator. For any $\alpha,\beta\in\mathbb{C}$ and $v_1,w_1,w_2\in L^2_\mathbb{A}([t_0,T];L^2(\mathscr{C}))$,
\begin{align*}
   &\left\langle \widehat{Q}_3^{t_0}(\alpha w_1+\beta w_2), v_1\right\rangle_{L^2_\mathbb{A}([t_0,T];L^2(\mathscr{C})),L^2_\mathbb{A}([t_0,T];L^2(\mathscr{C}))}\\
 &\indent\indent =\mathcal{B}_{t_0}(v_1,\alpha w_1+\beta w_2)\\
 &\indent\indent= \langle \alpha w_1,v_1\rangle_{L^2_\mathbb{A}([t_0,T];L^2(\mathscr{C})),L^2_\mathbb{A}([t_0,T];L^2(\mathscr{C}))}\\
 &\indent\indent\indent +\langle \beta w_2,v_1\rangle_{L^2_\mathbb{A}([t_0,T];L^2(\mathscr{C})),L^2_\mathbb{A}([t_0,T];L^2(\mathscr{C}))},
\end{align*}
which indicates that
\begin{equation*}
\widehat{Q}_3^{t_0}(\alpha w_1+\beta w_2) =\alpha\widehat{Q}_3^{t_0}w_1+\beta\widehat{Q}_3^{t_0}w_1.
\end{equation*}
Therefore, $\widehat{Q}_3^{t_0}$ is a bounded linear operator on $L^2_\mathbb{A}([t_0,T];L^2(\mathscr{C}))$. Let $Q_3^{t_0}:=\frac{1}{2}\widehat{Q}_3^{t_0}$. 
It follows from \eqref{the estimate of widehatQ3t0} that
\begin{equation}\label{the estimate of Q3-t0}
\left\|Q_3^{t_0}\right\|_{\mathcal{L}(L^2_\mathbb{A}([t_0,T];L^2(\mathscr{C})))}\leq \mathcal{C}\left(\|P_T\|_{\mathcal{L}}+\|\mathbb{H}\|_{L^1_\mathbb{A}([0,T];\mathcal{L})}\right).
\end{equation}

For any $t_0\in[0,T]$, we define two operators $Q^{(t_0)}(\cdot,\cdot,\cdot)$ and $\widehat{Q}^{(t_0)}(\cdot,\cdot,\cdot)$ on  $L^2(\mathscr{C}_{t_0})\times L^1_\mathbb{A}([t_0,T];$ $L^2(\mathscr{C}))\times L^2_\mathbb{A}([t_0,T];L^2(\mathscr{C}))$ as follows:
\begin{equation}\label{definition of Qt0 and widetilde Qt0}
\left\{
\begin{aligned}
&Q^{(t_0)}(\xi,u,v):=Q_1^{t_0}\xi+Q_2^{t_0}u+Q_3^{t_0} v,\\
&\widehat{Q}^{(t_0)}(\xi,u,v):=\widehat{Q}_1^{t_0}\xi+\widehat{Q}_2^{t_0}u+(Q_3^{t_0})^*v,\\
&\forall\; (\xi,u,v)\in L^2(\mathscr{C}_{t_0})\times L^2_\mathbb{A}([t_0,T];L^2(\mathscr{C}))\times L^2_\mathbb{A}([t_0,T];L^2(\mathscr{C})).
\end{aligned}
\right.
\end{equation}
According to the definition of $Q_1^{t_0},Q_2^{t_0}$, $Q_3^{t_0}$ (\textit{resp.} $\widehat{Q}_1^{t_0},\widehat{Q}_2^{t_0}$ and $\widehat{Q}_3^{t_0}$), it is obvious that $Q^{(t_0)}(\cdot,\cdot,\cdot)$ (\textit{resp.} $\widehat{Q}^{(t_0)}(\cdot,\cdot,\cdot)$) is bounded linear operators from $ L^2(\mathscr{C}_{t_0})\times L^2_\mathbb{A}([t_0,T];L^2(\mathscr{C}))\times L^2_\mathbb{A}([t_0,T];L^2(\mathscr{C}))$ to $L^2_\mathbb{A}([t_0,T];L^2(\mathscr{C}))$, and 
\begin{equation*}
Q^{(t_0)}(0,0,\cdot)^*=\widehat{Q}^{(t_0)}(0,0,\cdot).
\end{equation*}

For any $t_0\in [0,T]$, from \eqref{inner product of Step 1 of The relaxed transposition}, \eqref{weak convergence of Pn to P}, \eqref{weak convergence of Pn to R}, \eqref{weak convergence Q1}, \eqref{weak convergence of Q2}, \eqref{definition of bilinear functional B}, \eqref{inner product about B} and \eqref{definition of Qt0 and widetilde Qt0}, we find that, for all $(\xi_1,u_1,v_1),(\xi_2,u_2,v_2)\in L^2(\mathscr{C}_{t_0})\times L^2_\mathbb{A}([t_0,T];L^2(\mathscr{C}))\times L^2_\mathbb{A}([t_0,T];L^2(\mathscr{C}))$,
\begin{equation}\label{inner product of finally of step2}
\begin{aligned}
&\left\langle P_Tx_2(T),x_1(T)\right\rangle-\int_{t_0}^T\langle \mathbb{H}(t)x_2(t),x_1(t)\rangle dt\\
=&\langle R^{(t_0)}\xi_2, \xi_1\rangle+\int_{t_0}^{T}\langle P(t)x_2(t), u_1(t)\rangle dt+\int_{t_0}^{T}\langle P(t)u_2(t), x_1(t) \rangle dt\\
&+\int_{t_0}^{T}\langle P(t)F(t) x_2(t), v_1(t)\rangle dt +\int_{t_0}^{T}\langle P(t)v_2(t), F(t) x_1(t)+ v_1(t)\rangle dt\\
&+\int_{t_0}^{T}\left\langle Q^{(t_0)}(\xi_2,u_2, v_2)(t), v_1(t)\right\rangle dt+\int_{t_0}^{T}\left\langle v_2(t),\widehat{Q}^{(t_0)}(\xi_1,u_1,v_1)(t) \right\rangle dt.
\end{aligned}
\end{equation}
\textbf{Step 3.} In this step, we claim that $P(\cdot)\in C_\mathbb{A}([0,T];\mathcal{L}(L^2(\mathscr{C})))$ and
\begin{equation}\label{the aim of step 3-1}
P(t)=R^{(t)},\quad \textrm{a.e.}\quad t\in[0,T].
\end{equation}
Similar to \eqref{P-U-H}, by \eqref{inner product of finally of step2}, we find that, for any $\xi\in L^2(\mathscr{C}_{t_0})$,
\begin{equation}\label{the first equation about R}
 m\left(U^*(T,t_0)P_TU(T,t_0)\xi-\int_{t_0}^T U^*(t,t_0)\mathbb{H}(t)U(t,t_0)\xi dt\bigg|L^2(\mathscr{C}_{t_0})\right)
=R^{(t_0)}\xi, \  t_0\in [0,T].
\end{equation}

Now we show that $R^{(\cdot)}\xi \in C_\mathbb{A}([t_0,T];L^2(\mathscr{C}))$ for any $\xi\in L^2(\mathscr{C}_{t_0})$. From \eqref{the first estimate of solution to BQSDE-HS}, it remains to show that
\begin{equation}\label{the purpose of step 3-2}
\lim_{n\to\infty}\left\|R^{(\cdot)}\xi-P^n(\cdot)\xi\right\|_{C_\mathbb{A}([t_0,T];L^2(\mathscr{C}))}=0.
\end{equation}
In order to obtain it, from \eqref{P-U-H} and \eqref{the first equation about R}, we have that, for any $t\in[t_0,T]$
\begin{equation*}
\begin{aligned}
\left\|R^{(t)}\xi-P^n(t)\xi\right\|_2\leq& \left\|U^*(T,t)\left\{P_T-P_T^n\right\}U(T,t)\xi\right\|_2\\
&+\left\|\int_{t}^T U^*(s,t)\left\{\mathbb{H}(s)-\mathbb{H}^n(s)\right\}U(s,t)\xi ds\right\|_2.
\end{aligned}
\end{equation*}
By Lemma \ref{the relationship between solution and coefficients}, we see that for any $\varepsilon_1>0$, there is a $\delta_1>0$ so that for all $\tau\in [t_0,T]$ and $\sigma\in[\tau,\tau+\delta_1]$,
\begin{equation}\label{U-tau-sigma}
\|U(t,\tau)\xi-U(t,\sigma)\xi\|_{2}<\varepsilon_1,\quad  t\in [\sigma, T].
\end{equation}
Now, let us choose a monotonically increasing sequence $\{t_i\}_{i=1}^{N_1}\subset [0,T]$ for $N_1$ being sufficiently large such that $t_{i+1}-t_i\leq \delta_1$ with $t_1=t$ and $t_{N_1}=T$, and 
\begin{equation}\label{a estimate of H in finite interval}
\int_{t_i}^{t_{i+1}}\|\mathbb{H}(s)\|_{\mathcal{L}}ds<\varepsilon_1,\ \textrm{ for all}\ i=1,\cdots, N_1-1.
\end{equation}
For any $t\in(t_i,t_{i+1}]$, we conclude that
\begin{equation}\label{the estimate of H-Hn}
 \begin{aligned}
 &\left\|\int_{t}^TU^*(s,t)\{\mathbb{H}(s)-\mathbb{H}^n(s)\}U(s,t)\xi ds\right\|_{2}\\
&\leq\left\|\int_{t_i}^TU^*(s,t)\{\mathbb{H}(s)-\mathbb{H}^n(s)\}U(s,t_i)\xi ds\right\|_{2}\\
& \indent+ \left\|\int_{t}^{t_i}U^*(s,t)\{\mathbb{H}(s)-\mathbb{H}^n(s)\}U(s,t_i)\xi ds\right\|_{2}\\
 &\indent+\left\|\int_{t}^TU^*(s,t)\{\mathbb{H}(s)-\mathbb{H}^n(s)\}\{U(s,t)-U(s,t_i)\}\xi ds\right\|_{2}\\
 &\leq \mathcal{C}\sup_{s\in[t,T]}\|\{U(s,t)-U(s,t_i)\}\xi\|_{2}+\mathcal{C}\int_{t_i}^{t_{i+1}}\|\mathbb{H}(s)\|_{\mathcal{L}}ds\\
 &\indent+\mathcal{C}\int_{t_i}^T\|\{\mathbb{H}(s)-\mathbb{H}^n(s)\}U(s,t_i)\xi\|_{2}ds,
 \end{aligned}
\end{equation}
where $\mathbb{H}^n=\Gamma^n\mathbb{H}$. By the choice of $\mathbb{H}^n$, there is an integer $N_2(\varepsilon_1)>0$ so that for all $n>N_2$ and $i=1,\cdots,N_1-1$,
\begin{equation}\label{H-Hn-U-xi}
\int_{t_i}^T\|\{\mathbb{H}(s)-\mathbb{H}^n(s)\}U(s,t_i)\xi\|_{2}ds\leq \varepsilon_1.
\end{equation}
From \eqref{U-tau-sigma}-\eqref{H-Hn-U-xi}, we find that, for any $n>N_2$ and $t\in[t_0,T]$,
\begin{equation*}
\left\|\int_{t}^TU^*(s,t)\{\mathbb{H}(s)-\mathbb{H}^n(s)\}U(s,t)\xi ds\right\|_{2}\leq \mathcal{C}_1\varepsilon_1,
\end{equation*}
where $\mathcal{C}_1$ is independent of $\varepsilon_1$, $n$ and $t$. Similarly, there is an integer $N_3(\varepsilon_1)>0$ such that for every $n>N_3$,
\begin{equation*}
\|U^*(T,t)\{P_T-P^n_T\}U(T,t)\xi\|_{2}\leq \mathcal{C}_2\varepsilon_1,
\end{equation*}
where $\mathcal{C}_2$ is independent of $\varepsilon_1$, $n$ and $t$.
For any $\varepsilon>0$, we choose $\varepsilon_1=\frac{\varepsilon}{\mathcal{C}_1+\mathcal{C}_2}$, then 
\begin{equation*}
\left\|R^{(t)}\xi-P^n(t)\xi\right\|_{2}<\varepsilon,\quad  n>\max\{N_2(\varepsilon_1), N_3(\varepsilon_1)\},\quad  t\in[t_0,T].
\end{equation*}
Therefore, \eqref{the purpose of step 3-2} holds.

To show \eqref{the aim of step 3-1}, for any $0\leq t_1<t_2<T$ and $\eta_1,\eta_2\in L^2(\mathscr{C}_{t_1})$, let $\xi_1=0$, $u_1=\frac{\chi_{[t_1,t_2]}}{t_2-t_1}\eta_1$ and $v_1=0$ in \eqref{QSDE-Forward-1}, and $\xi_2=\eta_2$ and $u_2=v_2=0$ in \eqref{QSDE-Forward-2}. By means of Lemma \ref{the relationship between solution and coefficients} and \eqref{inner product of finally of step2}, we obtain that
\begin{equation}\label{inner product to prove P=R}
\begin{aligned}
&\frac{1}{t_2-t_1}\int_{t_1}^{t_2}\langle P(t)U(t,t_1)\eta_2, \eta_1 \rangle dt\\
=&\left\langle P_TU(T,t_1)\eta_2, x_{1,t_2}(T) \right\rangle-\int_{t_1}^T\langle \mathbb{H}(t)U(t,t_1)\eta_2, x_{1,t_2}(t)\rangle dt,
\end{aligned}
\end{equation}
where $x_{1,t_2}(\cdot)$ is the solution to \eqref{QSDE-Forward-1} with the above choice of $\xi_1$, $u_1$ and $v_1$. Clearly, 
\begin{equation}\label{the solution to x1 under the condition to proof P=R}
x_{1,t_2}(t)=\left\{
\begin{aligned}
&\int_{t_1}^tD(s)x_{1,t_2}(s)ds+\frac{1}{t_2-t_1}\int_{t_1}^t\eta_1ds+\int_{t_1}^tF(s)x_{1,t_2}(s)dW(s),\ &t\in[t_1,t_2],\\
&U(t,t_2)x_{1,t_2}(t_2),\ &t\in[t_2,T].
\end{aligned}
\right.
\end{equation}
By \cite[Theorem 3.5]{B.S.W.1}, together with  \eqref{the definition of M_(D,F)}, we can obtain that for all $t\in[t_1,t_2]$
\begin{equation*}
\|x_{1,t_2}(t)\|_2^2\leq \mathcal{C}\left(\int_{t_1}^tM_{D,F,2}(s)\|x_{1,t_2}(s)\|_{2}^2ds +\|\eta_1\|^2_{2}\right).
\end{equation*}
By the Gronwall inequality, we can derive that
\begin{equation}\label{the estimate x1t1 under the condition to proof P=R}
\|x_{1,t_2}(\cdot)\|^2_{C_\mathbb{A}([t_1,t_2];L^2(\mathscr{C}))}\leq \mathcal{C}\|\eta_1\|^2_{2},
\end{equation}
where the constant $\mathcal{C}$ is independent of $t_2$. On the other hand, by \eqref{the solution to x1 under the condition to proof P=R}, we have
\begin{equation}\label{The estimate of x1t2-eta1}
\begin{aligned}
&\|x_{1,t_2}(t_2)-\eta_1\|^2_{2}
\leq \mathcal{C}\int_{t_1}^{t_2}M_{D,F,2}(s)\|x_{1,t_2}(s)\|_{2}^2ds +\mathcal{C}\left\|\frac{1}{t_2-t_1}\int_{t_1}^{t_2}\eta_1 ds-\eta_1\right\|^2_{2}.
\end{aligned}
\end{equation}
From \eqref{the estimate x1t1 under the condition to proof P=R} and \eqref{The estimate of x1t2-eta1}, we conclude that
\begin{equation*}
\lim_{t_2\to t_1+}\|x_{1,t_2}(t_2)-\eta_1\|^2_{2}\leq \mathcal{C}\lim_{t_2\to t_1+}\left\|\frac{1}{t_2-t_1}\int_{t_1}^{t_2}\eta_1 ds-\eta_1\right\|^2_{2}=0.
\end{equation*}
Therefore, for any $t\in[t_2,T]$,
\begin{align*}
  & \lim_{t_2\to t_1+}\|U(t,t_2)x_{1,t_2}(t_2)-U(t,t_1)\eta_1\|^2_{2}\\
\leq& 2\lim_{t_2\to t_1+}\left(\|U(t,t_2)x_{1,t_2}(t_2)-U(t,t_2)\eta_1\|^2_{2}
  +\|U(t,t_2)\eta_1-U(t,t_1)\eta_1\|^2_{2}\right) \\
\leq&\mathcal{C}\lim_{t_2\to t_1+}\left(\|x_{1,t_2}(t_2)-\eta_1\|^2_{2}+\|U(t,t_2)\eta_1-U(t,t_1)\eta_1\|^2_{2}\right)\\
= &0.
\end{align*}
Hence, for any $t\in[t_2,T]$,
\begin{equation}\label{the formula of x1t2}
 \lim_{t_2\to t_1+}x_{1,t_2}(t)=\lim_{t_2\to t_1+}U(t,t_2)x_{1,t_2}(t_2)=U(t,t_1)\eta_1, \ \textrm{ in}\ L^2(\mathscr{C}_{t}).
\end{equation}
From \eqref{the formula of x1t2}, we can deduce that  
\begin{equation}\label{inner product when t2 to t1}
\begin{aligned}
 & \lim_{t_2\to t_1+}\left\{\langle P_TU(T,t_1)\eta_2, x_{1,t_2}(T)\rangle-\int_{t_1}^{T} \langle \mathbb{H}(t)U(t,t_1)\eta_2,  x_{1,t_2}(t)\rangle dt\right\}\\
 &\indent =\langle P_TU(T,t_1)\eta_2,  U(T,t_1)\eta_1\rangle-\int_{t_1}^{T}\langle \mathbb{H}(t)U(t,t_1)\eta_2, U(t,t_1)\eta_1\rangle dt.
\end{aligned}
\end{equation}
Next, we choose $\xi_1=\eta_1$ and $u_1=v_1=0$ in \eqref{QSDE-Forward-1}, and $\xi_2=\eta_2$ and $u_2=v_2=0$ in \eqref{QSDE-Forward-2}. From \eqref{inner product of finally of step2}, one has that
\begin{equation}\label{inner product of with xi12=u12=v12=0}
\left\langle R^{(t_1)}\eta_2, \eta_1\right\rangle=\left\langle P_TU(T,t_1)\eta_2, U(T,t_1)\eta_1\right\rangle-\int_{t_1}^T\langle \mathbb{H}(t)U(t,t_1)\eta_2, U(t,t_1)\eta_1 \rangle dt.
\end{equation}
From \eqref{inner product to prove P=R}, \eqref{inner product when t2 to t1} and \eqref{inner product of with xi12=u12=v12=0}, we obtain that
\begin{equation}\label{ft}
\lim_{t_2\to t_1+}\frac{1}{t_2-t_1}\int_{t_1}^{t_2}\left\langle P(t)U(t,t_1)\eta_2, \eta_1\right\rangle dt=\left\langle R^{(t_1)}\eta_2, \eta_1\right\rangle.
\end{equation}

On the other hand, by \cite[Lemma 4.14]{L.Z-2020},
there is a monotonically decreasing sequence $\{t_2^{(n)}\}_{n=1}^\infty$ with $t_2^{(n)}>t_1$ for every $n$, such that for any $t_1\in[0,T)$,
\begin{equation}\label{n to infty,P}
 \lim_{t_2^{(n)}\to t_1+}\frac{1}{t_2^{(n)}-t_1}\int_{t_1}^{t_2^{(n)}}\left\langle P(t)U(t,t_1)\eta_2, \eta_1\right\rangle dt=\langle P(t_1)\eta_2, \eta_1\rangle.
\end{equation}
From \eqref{ft} and \eqref{n to infty,P}, we can infer that
\begin{equation*}
\langle R^{(t_1)}\eta_2, \eta_1\rangle=\langle P(t_1)\eta_2, \eta_1\rangle,\ \textrm{ a.e. }  t_1\in[0,T).
\end{equation*}
Since $\eta_1,\eta_2\in L^2(\mathscr{C}_{t_1})$ are arbitrary, we conclude \eqref{the aim of step 3-1}.

It follows from \eqref{inner product of finally of step2} and \eqref{the aim of step 3-1} that $ \left(P(\cdot),Q^{(\cdot)},\widehat{Q}^{(\cdot)}\right)$ satisfies \eqref{Def of relaxed transposition solution}. Hence, the triple $\left(P(\cdot),Q^{(\cdot)},\widehat{Q}^{(\cdot)}\right)$ is a relaxed transposition solution to \eqref{BSDE-P}, and by \eqref{the estimate about P}, \eqref{the estimate of Q1t0}, \eqref{the estimate of Q2-t0}, \eqref{the estimate of Q3-t0} and \eqref{definition of Qt0 and widetilde Qt0}, we derive that \eqref{estimate of main thm} holds. Then we complete the proof of the existence of the relaxed transposition solution.

Finally, we prove the uniqueness of the relaxed transposition solution to \eqref{BSDE-P}. Assume that both $ \left(P(\cdot),Q^{(\cdot)},\widehat{Q}^{(\cdot)}\right)$ and $\left(\overline{P}(\cdot),\overline{Q}^{(\cdot)},\widehat{\overline{Q}}^{(\cdot)}\right)$ are two relaxed transposition solutions to \eqref{BSDE-P}. Then, from \eqref{Def of relaxed transposition solution}, for any $t_0\in[0,T]$, we have
\begin{equation}\label{inner product of uniqueness of the transposition solution}
\begin{aligned}
 0=&\langle\{P(t_0)-\overline{P}(t_0)\}\xi_2, \xi_1\rangle+\int_{t_0}^T\langle \{P(t)-\overline{P}(t)\}F(t)x_2(t),v_1(t)\rangle dt\\
 &+\int_{t_0}^T\langle\{P(t)-\overline{P}(t)\}u_2(t), x_1(t)\rangle dt+\int_{t_0}^T\langle \{P(t)-\overline{P}(t)\}x_2(t), u_1(t)\rangle dt\\
&+\int_{t_0}^T\langle \{P(t)-\overline{P}(t)\}v_2(t), F(t)x_1(t)+v_1(t)\rangle dt\\
&+\int_{t_0}^T\left\langle Q^{(t_0)}(\xi_2,u_2,v_2)(t)-\overline{Q}^{(t_0)}(\xi_2,u_2,v_2)(t), v_1(t)\right\rangle dt\\
&+\int_{t_0}^{T}\left\langle v_2(t), \widehat{Q}^{(t_0)}(\xi_1,u_1,v_1)(t)-\widehat{\overline{Q}}^{(t_0)}(\xi_1,u_1,v_1)(t) \right\rangle dt.
\end{aligned}
\end{equation}

Let $u_1=u_2=0$ and $v_1=v_2=0$ respectively in \eqref{QSDE-Forward-1} and \eqref{QSDE-Forward-2}. From \eqref{inner product of uniqueness of the transposition solution}, we obtain that
\begin{equation*}
\langle \{P(t_0)-\overline{P}(t_0)\}\xi_2,\ \xi_1\rangle=0, \quad \xi_1,\xi_2\in L^2(\mathscr{C}_{t_0}).
\end{equation*}
Hence, $P(\cdot)=\overline{P}(\cdot)$. It is easy to see that \eqref{inner product of uniqueness of the transposition solution} becomes that
\begin{equation*}
\begin{aligned}
0=&\int_{t_0}^T\left\langle Q^{(t_0)}(\xi_2,u_2,v_2)(t)-\overline{Q}^{(t_0)}(\xi_2,u_2,v_2)(t),\ v_1(t)\right\rangle dt\\
&+\int_{t_0}^T\left\langle v_2(t),\ \widehat{ Q}^{(t_0)}(\xi_1,u_1,v_1)(t)-\widehat{\overline{Q}}^{(t_0)}(\xi_1,u_1,v_1)(t)\right\rangle dt,\ t_0\in[0,T].
\end{aligned}
\end{equation*}
Choosing $v_1=0$ in \eqref{QSDE-Forward-1}, then the above equation becomes
\begin{equation*}
0=\int_{t_0}^T\left\langle v_2(t),\ \widehat{Q}^{(t_0)}(\xi_1,u_1,0)(t)-\widehat{\overline{Q}}^{(t_0)}(\xi_1,u_1,0)(t)\right\rangle dt.
\end{equation*}
By the arbitrary of $v_2$ in $L_\mathbb{A}^2([0,T];L^2(\mathscr{C}))$, we deduce that $\widehat{Q}^{(t_0)}(\cdot,\cdot,0)=\widehat{\overline{Q}}^{(t_0)}(\cdot,\cdot,0)$. Similarly, $Q^{(t_0)}(\cdot,\cdot,0)=\overline{Q}^{(t_0)}(\cdot,\cdot,0)$. Hence,
\begin{equation}\label{inner product of unqiue with Q}
\begin{aligned}
0=&\int_{t_0}^T\left\langle Q^{(t_0)}(0,0,v_2)(t)-\overline{Q}^{(t_0)}(0,0,v_2)(t), v_1(t)\right\rangle dt\\
&+\int_{t_0}^T\left\langle v_2(t),\widehat{Q}^{(t_0)}(0,0,v_1)(t)-\widehat{\overline{Q}}^{(t_0)}(0,0,v_1)(t)\right\rangle dt.
\end{aligned} 
\end{equation}
Since $\overline{Q}^{(t_0)}(0,0,v)^*=\widehat{\overline{Q}}^{(t_0)}(0,0,v)$ and $Q^{(t_0)}(0,0,v)^*=\widehat{Q}^{(t_0)}(0,0,v)$ for any $v\in L_\mathbb{A}^2([0,T];L^2(\mathscr{C}))$,
together with \eqref{inner product of unqiue with Q}, we have that
\begin{equation*}
 0=2\int_{t_0}^T\left\langle v_2(t),\widehat{Q}^{(t_0)}(0,0,v_1)(t)-\widehat{\overline{Q}}^{(t_0)}(0,0,v_1)(t)\right\rangle dt.
\end{equation*}
This implies that $Q^{(t_0)}(0,0,\cdot)=\overline{Q}^{(t_0)}(0,0,\cdot)$ and $\widehat{Q}^{(t_0)}(0,0,\cdot)=\widehat{\overline{Q}}^{(t_0)}(0,0,\cdot)$. Hence $Q^{(t_0)}(\cdot,\cdot,\cdot)=\overline{Q}^{(t_0)}(\cdot,\cdot,\cdot)$ and $\widehat{Q}^{(t_0)}(\cdot,\cdot,\cdot)=\widehat{\overline{Q}}^{(t_0)}(\cdot,\cdot,\cdot)$ for any $t_0\in [0,T]$. Therefore, the relaxed transposition solution to \eqref{BSDE-P} is unique.
\end{proof}

\end{document}